\newcommand{\version}{arxiv}
\newtheorem{theorem}{Theorem}[section]
\newtheorem{lemma}[theorem]{Lemma}
\newtheorem{assumption}{Assumption}
\newtheorem{corollary}[theorem]{Corollary}
\newtheorem{proposition}[theorem]{Proposition}
\newtheorem{remark}[theorem]{Remark}
\newtheorem{definition}[theorem]{Definition}
\newtheorem{theorem}{Theorem}[section]
\newtheorem{assumption}{Assumption}
\newtheorem{corollary}[theorem]{Corollary}
\newtheorem{proposition}[theorem]{Proposition}
\newtheorem{remark}[theorem]{Remark}
\newtheorem{definition}[theorem]{Definition}
\newcommand{\solar}[1]{${\sf solar #1}$\xspace}
\newcommand{\N}{\mathbb{N}}
\newcommand{\Q}{\mathbb{Q}}
\newcommand{\R}{\mathbb{R}}
\newcommand{\Z}{\mathbb{Z}}
\newcommand{\K}{\mathcal{K}}
\newcommand{\M}{\mathcal{M}}
\newcommand{\B}{\mathcal{B}}
\newcommand{\Ll}{\mathcal{L}}
\newcommand{\Ss}{\mathcal{S}}
\newcommand{\Dd}{\mathcal{D}}
\newcommand{\T}{\mathcal{T}}
\newcommand{\bvec}[1]{{\bf{ #1}}}
\newcommand{\vecx}{{\bf{x}}}
\newcommand{\barx}{\bar{\vecx}}
\newcommand{\bard}{\bar{\vecd}}
\newcommand{\vecy}{{\bf{y}}}
\newcommand{\vecd}{{\bf{d}}}
\newcommand{\vecs}{{\bf{s}}}
\newcommand{\vecw}{{\bf{w}}}
\newcommand{\vecv}{{\bf{v}}}
\newcommand{\vecz}{{\bf{z}}}
\newcommand{\vecp}{{\bf{p}}}
\newcommand{\vecgreek}[1]{\text{\boldmath$#1$}}
\newcommand{\sumgext}{\displaystyle\sum_{\ell\in\Gext}}
\newcommand{\Omegalog}{\Omega^{\tt int}}
\newcommand{\Omegaext}{\Omega^{\tt ext}}
\newcommand{\Glog}{\mathcal{G}^{\tt int}}
\newcommand{\Gext}{\mathcal{G}^{\tt ext}}
\newcommand{\cext}{c^{\tt ext}}
\newcommand{\cint}{c^{\tt int}}
\newcommand{\norm}[1]{\|#1\|}
\newcommand{\set}[1]{\{#1\}}
\newcommand{\F}{\mathcal{F}}
\definecolor{Red}{rgb}{1,0,0}
\definecolor{Gray}{rgb}{0.2,0.2,0.2}
\definecolor{Maroon}{rgb}{0.6,0.05,0.03}
\definecolor{Blue}{rgb}{0,0.7,0.9}
\definecolor{Green}{rgb}{0,.7,0}
\definecolor{orange}{rgb}{1.00,0.41,0.16}
\newcommand{\blue}{\color{blue}}
\title{A penalty-interior point method combined with MADS for equality and inequality constrained optimization}
\author{Charles Audet\thanks{GERAD and Department of Mathematics and Industrial Engineering, Polytechnique Montr\'eal. 6079, Succ. Centre-ville Montr\'eal, Qu\'ebec H3C~3A7, Canada (\href{mailto:charles.audet@gerad.ca}{charles.audet@gerad.ca}, 
\href{mailto:youssef.diouane@polymtl.ca}{youssef.diouane@polymtl.ca},
\href{mailto:sebastien.le-digabel@polymtl.ca}{sebastien.le-digabel@polymtl.ca},
\href{mailto:christophe.tribes@polymtl.ca}{christophe.tribes@polymtl.ca})}%
\and
Andrea Brilli\thanks{``Sapienza'' University of Rome, Department of Computer Control and Management Engineering ``A. Ruberti'', Rome, Italy (\href{mailto:brilli@diag.uniroma1.it}{brilli@diag.uniroma1.it}). First and corresponding author.}
\and
Youssef Diouane\footnotemark[1]
\and
S\'ebastien {Le~Digabel}\footnotemark[1]
\and
Everton J. Silva \thanks{NOVA University of Lisbon, Center for Mathematics and Applications (Nova Math), Campus da Caparica, 2829-516, Caparica, Portugal (\href{mailto:ejo.silva@campus.fct.unl.pt}{ejo.silva@campus.fct.unl.pt}).\\
ORCID: 0000-0002-3043-5393 (Audet), 0000-0002-8307-4106 (Brilli), 0000-0002-6609-7330 (Diouane), 0000-0003-3148-5090 (Le~Digabel), 0000-0002-3627-3030 (Silva), 0000-0002-8740-6155 (Tribes).
}
\and
Christophe Tribes\footnotemark[1]
}
\date{\small \today}
\newcommand{\textabstract}{
This work introduces MADS-PIP, an efficient framework that integrates a penalty-interior point strategy into the mesh adaptive direct search (MADS) algorithm for solving nonsmooth blackbox optimization problems with general inequality and equality constraints.
Inequality constraints are partitioned into two subsets: one treated via a logarithmic barrier applied to an aggregated interior constraint violation, and the other handled through an exterior quadratic penalty.
All equality constraints are treated by the exterior penalty. 
A merit function defines a sequence of unconstrained subproblems, which are solved approximately using MADS, while a carefully designed update rule drives the penalty-barrier parameter to zero. 
In the nonsmooth setting, we establish convergence results ensuring feasibility for general constraints as well as Clarke stationarity for inequality-constrained problems. 
Computational experiments on both analytical test sets and challenging blackbox problems demonstrate that the proposed MADS-PIP algorithm is competitive with, and often outperforms, MADS with the progressive barrier strategy, particularly in the presence of equality constraints.
}
\begin{document}

\maketitle

\ifthenelse{\NOT{\equal{\version}{siam}}}{
\begin{abstract}
\textabstract\\

\noindent \textbf{Keywords:} Blackbox constrained optimization; Mesh adaptive direct search; Penalty-interior point method.\\

\noindent 
\textbf{AMS Classification:} 68Q25, 68R10, 68U05
\end{abstract}}{
\begin{abstract}
\textabstract
\end{abstract}
\begin{keywords}
Blackbox constrained optimization; Mesh adaptive direct search; Penalty-interior point method. 
\end{keywords}
\begin{MSCcodes}
68Q25, 68R10, 68U05
\end{MSCcodes}
}

\section{Introduction}

The presence of general constraints in optimization introduces challenges. 
For nonlinear or nonconvex constraints, classical optimization methods, such as gradient-based ones, may be inefficient~\cite{Be1982}. 
The task is even harder when derivatives are unavailable. 
Algorithms that do not rely on derivative information are particularly relevant for problems where the objective and/or the constraint functions are defined by complex simulations, for which derivatives are not computable, unreliable, or simply nonexistent.
Such nonsmooth problems are addressed by Derivative-Free Optimization (DFO) algorithms~\cite{AuHa2017,CoScVi2009}. 
This work targets constrained optimization problems of the form
\begin{align}
\displaystyle\min_{\vecx \in \mathbb{R}^n} \quad & f(\vecx) \nonumber\\
\text{s.t.} \quad & g_\ell(\vecx)\leq 0, & \ell=1,2,\dots,m \label{P0}\tag{$P_0$}\\
& h_j(\vecx)=0,  & j=1,2,\dots,p \nonumber
\end{align}
where $f:\R^{n}\to\overline\R$, $g_\ell:\R^n\to\overline\R$ for all $\ell=1,2,\ldots,m$, $h_j:\R^n\to\overline\R$ for all $j=1,2,\ldots,p$, and $\overline\R = \R\cup\{+\infty\}$. 

\subsection{Constraint handling in DFO}

Over the years, several methods have been proposed to handle constraints, most of them employing penalty, filter, or barrier approaches~\cite{NoWr2006}.

In the context of direct search methods, early contributions to smooth constrained problems were proposed by Lewis and Torczon for bound-constrained~\cite{LeTo1999} and linearly constrained~\cite{LeTo2000} problems.
They proposed an augmented Lagrangian approach~\cite{LeTo2002} for general constraints, using a sequence of bound-constrained minimizations.
Audet and Dennis~\cite{AuDe2004} propose a filter approach~\cite{FlLe2002} that accepts a trial point which improves either the objective function value or the constraint violation measure.

Mesh Adaptive Direct Search (MADS)~\cite{AuDe2006} is a generalization of pattern search methods that handles nonsmooth constraints using an extreme barrier approach that only evaluates feasible points. 
To address the limitations of the extreme barrier, a progressive barrier approach was proposed in~\cite{AuDe2009}.
Similar to a filter approach, the progressive barrier method sets a threshold on the constraint violation accepted, progressively decreasing its value as the iterations progress. The approach was later extended to handle linear equality constraints~\cite{AuLedPey2015}, by reformulating the optimization problem, potentially reducing the number of original variables.

Merit-function techniques have been investigated within the derivative-free literature;
for instance,~\cite{GrVi2014} proposes a merit-based strategy for DFO methods, while~\cite{DIOUANE2021100001} extends a class of evolution strategies to handle quantifiable relaxable constraints~\cite{LedWild2015} through a merit function combined with a restoration procedure, and addresses unrelaxable constraints using either an extreme-barrier or projection approach.

Linesearch approaches are directional direct search methods. In this framework, a contribution to smooth bound-constrained problems was proposed by Lucidi and Sciandrione~\cite{Lucidi2002}. Slight modifications of the same approach have been recently proposed in~\cite{BrilliCristfofari2025}, inspired by~\cite{Brilli2024}, proving worst-case complexity bounds and a finite active-set identification property.
In~\cite{LiLu2009} the authors consider inequality constrained problems using a transformation into unconstrained or linearly constrained minimization of a smooth approximation of a nonsmooth exact penalty function. In~\cite{FaLiLuRi2014} an exact penalty approach addressing nonsmooth inequalities is proposed.
A sequential penalty approach was proposed in~\cite{LiLuSc2010}.  It solves the original problem through a sequence of approximate minimizations of a merit function, progressively increasing the penalization of constraint violations.
{In~\cite{BrLiLu2025}, using the same algorithmic framework, a merit function handles inequality constraints with a logarithmic barrier approach and equality constraints by including a penalization term}. The latter approach has been recently integrated into direct search (LOG-DS) to address general nonlinear constrained optimization problems~\cite{BrCuLiSi2024}, assuming the continuous differentiability of the functions. The convergence to KKT-stationary points was established for LOG-DS under smooth assumptions. 

\subsection{Contributions and structure of the paper}

DFO algorithms, particularly direct search methods, are well known for their ability to handle nonsmooth problems. This work extends the MADS algorithm to equality constrained optimization problems.
The constraints are partitioned into two sets.
One set is treated by a logarithmic barrier, and the other is handled by an exterior penalty.
In the nonsmooth setting, we derive convergence guarantees that lead to feasibility under general constraints and to Clarke stationarity in the case of inequality constraints.
Computational results obtained on both analytical benchmarks and realistic blackbox problems show that the proposed method is competitive with, and often outperforms, the MADS progressive barrier approach~\cite{AuDe2009}, especially in the presence of equality constraints.

\Cref{sec-algo} describes the proposed algorithm, with emphasis on the partition of inequality constraints.
\Cref{sec-conv} develops the convergence analysis, addressing both feasibility for general constraints and stationarity for inequality constraints. 
\Cref{sec-Impl} describes practical implementation aspects, and \cref{sec-tests} reports computational experiments on analytical test problems and difficult blackbox problems. 
\Cref{sec-conc} concludes with final remarks.

\section{A penalty-interior point method for MADS}
\label{sec-algo}

This work proposes to solve~\eqref{P0} by means of a merit function approach 
that transforms the constrained problem into a sequence of unconstrained ones. 

\subsection{Partitioning the constraints}

The approach partitions the inequality constraints indices into two sets, namely $\Glog$ and $\Gext$, so that $\Glog\cap\Gext=\emptyset$ and $\Glog\cup\Gext=\set{1,2,\dots,m}$. 
This partition depends on the starting point, and further details are provided in \cref{sec-Impl}. 
The set of feasible points for~\eqref{P0} is the intersection of 
\begin{eqnarray*}
\Omegalog &=&\{\vecx\in\R^n\mid \ g_{\ell}(\vecx) \leq 0, \ \forall\ell\in\Glog \}\\
\mbox{and } \quad \Omegaext &=&\left\{ \vecx\in\R^n \mid \ 
g_{\ell}(\vecx) \leq 0, \  \forall\ell\in\Gext; \ 
h_j(\vecx) = 0, \  \forall j=1,2,\dots,p
\right\}.
\end{eqnarray*}
Therefore, a point $\vecx\in\R^n$ is feasible for~\eqref{P0} if and only if $\vecx\in\Omega\equiv\Omegalog\cap\Omegaext$. 

We briefly recall the logarithmic-barrier/exterior-penalty strategy used in~\cite{BrCuLiSi2024}.
The approach analyzed in~\cite{BrLiLu2025} employs a merit function that treats the inequality constraints via a logarithmic barrier and the equality constraints via an exterior penalty term. A well-known drawback of a pure barrier strategy is the requirement of a strictly feasible starting point; this is addressed in~\cite{BrCuLiSi2024} by combining the logarithmic term with an exterior penalty that handles the constraints violated by the initial point. Another practical issue is that the use of the logarithmic term can potentially cause the merit function to be unbounded below. Since the method is based on minimizing such a function, this is a clear issue. 

This paper exploits the nonsmooth settings to address this difficulty by imposing a threshold on the constraint values, namely by considering $\min\{t, \, -g_\ell(\vecx)\}$, where $t>0$ is a fixed scalar. Although the choice of $t$ lies outside the scope of this work, we note that setting $t=1$ ensures that the logarithm of $\min\{t, \, -g_\ell(\vecx)\}$ is nonnegative for any $\vecx\in\R^n$ that strictly satisfies the constraint $g_\ell(\vecx) < 0$. The same effect of changing $t$ is achieved by scaling the constraint functions. 
Therefore, without loss of generality, we fix $t=1$ for the remainder of the paper. With this choice, the constraints indexed by $\Glog$ are aggregated into the {\em interior constraint violation} function
$$\cint (\vecx)\ =\begin{cases}
-\prod_{\ell\in\Glog}\ \min\left\{1, \, -g_\ell(\vecx)\right\} & \text{if } g_\ell(\vecx)\leq0 \text{ for all } \ell\in\Glog,\\
\phi^{\tt prox}(\vecx) & \text{otherwise},   
\end{cases}
$$
where $\phi^{\tt prox}(\vecx) =\max_{\ell\in\Glog} g_\ell(\vecx)$. The function $\phi^{\tt prox}(\cdot)$ represents a measure of proximity to the boundary of $\Omegalog$, assuming negative values for all points $\vecx$ that strictly satisfy the inequality constraints in $\Glog$, and positive values if at least one of such constraints is violated. The interior constraint violation function satisfies $\cint(\vecx)\in[-1,+\infty[$ for all $\vecx\in\R^n$, and $\cint(\vecx)\leq0$ if and only if $\vecx\in\Omegalog$. Therefore, under this transformation, the set $\Omegalog$ can be equivalently defined as $\Omegalog=\{\vecx\in\R^n\mid \cint(\vecx)\leq 0\}$. Furthermore, if for all $\ell\in\Glog$, the functions $g_\ell(\vecx)$ are (Lipschitz) continuous near a point $\vecx\in\R^n$, then both $\phi^{\tt prox}(\cdot)$ and $\cint(\cdot)$ are also (Lipschitz) continuous around $\vecx$. 

The proposed method deals with the constraint $\cint(\vecx)\leq0$ using the transformation $\log(-\cint(\vecx))$, thus treating it with a strictly feasible approach. Conceptually, this strategy is similar to the formulations in~\cite{BrCuLiSi2024,BrLiLu2025}, where all inequality constraints assigned to the logarithmic barrier appear individually in the merit function through the sum $\sum_{\ell} \log\left(-g_\ell(\vecx)\right)$. 
In fact, the properties of the logarithm ensure that, for all $\vecx\in\Omegalog$
{\small
$$\log(-\cint(\vecx)) = \log\left(\prod_{\ell\in\Glog}\ \min\left\{1, \, -g_\ell(\vecx)\right\}\right)= \sum_{\ell\in\Glog} \log\left(\min\{1,-g_\ell(\vecx)\}\right)\in[0,+\infty[,$$}
which differs only in the threshold described in the previous paragraph.

The remaining constraints are addressed by penalizing their violation, using the {\em exterior constraint violation} function
\begin{equation}\label{eq:violation_function}
\cext(\vecx) \ =\  
\sumgext \left(\max\{0,g_{\ell}(\vecx)\}\right)^2+\sum_{j=1}^{p} h_j(\vecx)^2\ \geq \ 0
\end{equation}
that satisfies $\cext(\vecx) = 0$ if and only if $\vecx \in \Omegaext$.
Given these two constraint violation functions, we introduce a merit function that solves~\eqref{P0} through a sequence of unconstrained problems.

\begin{definition}
\label{merit-z}
The merit function $z: \R^n\times \R_+ \to \overline\R$ associated to problem~\eqref{P0} is defined as
\begin{equation*}
\displaystyle z(\vecx;\rho) \ =\ 
\begin{cases}
f(\vecx) - \rho\log(-\cint(\vecx)) +
\frac{1}{\rho} \cext(\vecx) \qquad ~& \text{if } \cint(\vecx)<0,\\
+\infty \quad & \text{ otherwise},
\end{cases}
\end{equation*} 
where $\cint$ and $\cext$ are the interior and exterior constraint violation functions.
\end{definition}

Note that the penalty parameter $\rho$ is the same for both terms of $z(\cdot;\rho)$. 
The key property of this merit function is that
for any $\vecx \in \R^n$, 
$$ \lim_{\rho \searrow 0} \, z(\vecx; \rho) \ = \ 
\left\{ \begin{array}{cl}
f(\vecx) & \mbox{ if } \vecx \in \Omegalog_< \cap \Omegaext ,\\
+\infty & \mbox{ otherwise,}
\end{array}\right.$$
where $\Omegalog_< = \{ \vecx \in \R^n \mid \ \cint(\vecx)<0 \}$. 
The merit function is close, but not identical to the extreme barrier function~\cite{AuDe2006}, as the latter is equal to $f$ on the entire feasible set, including when $\vecx$ satisfies $\cint(\vecx)=0$.

\subsection{The algorithm: MADS-PIP}

The Penalty-Interior Point (PIP) method consists of generating a sequence of penalty-barrier parameters $\set{\rho_k}$, and finding a sequence of inexact solutions $\set{\vecx_k}\in\R^n$ to subproblems of the form
\begin{align}
\displaystyle \min_{\vecx\in\R^n}\ z(\vecx;\rho_k).
\label{Pz}
\tag{$P_{\rho_k}$}\\
\nonumber
\end{align}
\vspace*{-10mm}

The paper provides conditions ensuring that as $k$ goes to $\infty$, the sequence of solutions $\set{\vecx_k}$ converges to a point satisfying nonsmooth necessary optimality conditions. 
In order for that to happen, the merit function $z(\cdot;\rho_k)$ must converge to the objective function $f$.
This occurs when $\rho_k$ converges to~$0$.
\Cref{LOG-MADS-1} uses MADS to approximately solve the unconstrained subproblems, 
with an additional update criterion for updating the penalty-barrier parameter $\rho_k$. 
MADS stops solving subproblem ($P_{\rho_k}$) when $\Delta_{k+1}$ is sufficiently small.
When the penalty-barrier parameter is reduced (i.e., $\rho_{k+1}< \rho_k$), the algorithm proceeds to the next {subproblem~($P_{\rho_{k+1}}$).}
The strategy is described in the next paragraphs.


\begin{algorithm}
\caption{{\bf Penalty-Interior Point with MADS (MADS-PIP)} \label{LOG-MADS-1}}
\begin{algorithmic}[0]

\Require Starting point/incumbent $\vecx_0\in\R^n$, partition sets $\Glog$ and $\Gext$ such that $g_\ell(\vecx_0)<0$ for all $\ell\in\Glog$, penalty contraction parameter $\theta_{\rho}\in(0,1)$, initial frame size $\Delta_0>0$, frame size contraction/expansion parameter $\theta_{\Delta}\in(0,1)\cap\Q$, $\beta>1$, set $k=0$.
\LComment{{\bf Step~0}: Mesh definition}
\State Set $\M_k = \{\vecx_k + \delta_k \vecz  \, : \, \vecz \in \Z^n\}$ where $\delta_k = \min\left\{\Delta_k,\frac{\Delta_k^2}{\Delta_0}\right\}$
\LComment{{\bf Step~1}: Search step (optional)}
\If{$z(\bvec{s};\rho_k) < z(\vecx_k;\rho_k)$ holds for some $\bvec{s}$ in a finite set $\Ss_k\subset\mathcal{M}_k$}
\State $\vecx_{k+1} \gets \bvec{s}$,\  $\Delta_{k+1} \gets \theta_{\Delta}^{-1}\Delta_{k}$, \ $\rho_{k+1} \gets \rho_k$
\State declare the iteration as {\bf successful} and go to {\bf \blue Step~4}
\EndIf
\LComment{{\bf Step~2}: Poll step}
\State Select a finite set of poll directions $\Dd_k$ such that $\set{\vecx_k+\vecd_k\mid \vecd_k\in\Dd_k}\subseteq\F_k$
\State with $\F_k=\set{\vecx\in\M_k\mid \norm{\vecx_k-\vecx}\leq \Delta_k}$
\If{$z(\vecx_k+\vecd_k;\rho_k) < z(\vecx_k;\rho_k)$ holds for some $\vecd_k\in\Dd_k$}
\State $\vecx_{k+1} \gets \vecx_k+\vecd_k$, \ $\Delta_{k+1} \gets \theta_{\Delta}^{-1}\Delta_{k}$, \ $\rho_{k+1} \gets \rho_k$
\State declare the iteration as {\bf successful} and go to {\bf \blue Step~4}
\Else
\State $\vecx_{k+1} \gets \vecx_{k}$, \ $\Delta_{k+1} \gets \theta_{\Delta}\Delta_{k}$
\State declare the iteration as {\bf unsuccessful}  
\EndIf
\LComment{{\bf Step~3}: Penalty parameter update (at unsuccessful iterations)}
\If{$\Delta_{k+1} \le \min\left\{\rho_k^{\beta}, [\phi^{\tt prox}(\vecx_k)]^2\right\}$}
\State $\rho_{k+1} \gets \theta_{\rho}\rho_k$ \Comment{$\vecx_{k+1} = \vecx_k$ is an approximate solution to subproblem ($P_{\rho_k}$)}  
\Else
\State $\rho_{k+1} \gets \rho_k$
\Comment{The subproblem ($P_{\rho_k}$) is not satisfactorily solved\hspace{17.8mm}}  
\EndIf
\LComment{{\bf Step~4}: Stopping criterion check}
\If{some stopping criterion is satisfied} \State{\textbf{Stop}} \Else 
\State $k \gets k+1 $ and  go to {\bf \blue Step~0}
\EndIf
\end{algorithmic}
\end{algorithm}

\Cref{LOG-MADS-1} builds upon the MADS algorithm~\cite{AuDe2006,AuHa2017}. 
Thanks to the use of the mesh, MADS accepts any strict decrease in the objective function value to declare an iteration successful. 
More recently, the ADS algorithm~\cite{G-2025-53} proposes a strategy that does not require a mesh nor any sufficient decrease.

Starting from an initial guess $\vecx_0$ and an initial \textit{frame size} $\Delta_0$, \cref{LOG-MADS-1} generates a sequence of iterates $\set{\vecx_k}$.
The best known solution at the start of iteration $k$ is denoted $\vecx_k$ and is called the \textit{incumbent}.
At each iteration $k$, the frame size is used to define the mesh $\M_k$ ({\bf \blue Step~0}), as with OrthoMADS~\cite{AbAuDeLe09}. 
Then, the optional \textit{search step} ({\bf \blue Step~1}) is performed, wherein various heuristics, e.g., quadratic models~\cite{CoLed2011}, surrogates~\cite{AuLedSa21}, Latin Hypercube sampling~\cite{McCoBe79a} and Nelder-Mead~\cite{AuTr2018}, are exploited in order to improve the solution, i.e., the search attempts to find a point $\vecs\in\M_k$ satisfying $z(\bvec{s};\rho_k) < z(\vecx_k;\rho_k)$, which would become the incumbent  of the next iteration. 
If the search step succeeds, the algorithm proceeds directly to the iteration $k+1$, repeating the search step. 
If it fails, {\bf \blue Step~2}, i.e. the \textit{poll step}, is invoked.

The poll step is restricted to the frame $\mathcal{F}_k=\set{\vecx\in\M_k\mid \norm{\vecx_k-\vecx}\leq \Delta_k}$ around the incumbent $\vecx_k$. 
A set of poll directions $\Dd_k$ is generated, so that the set of trial points $\{\vecx_k+\vecd_k\mid \vecd_k\in\Dd_k\}$ is a subset of $\F_k$, and thus $\norm{\vecd_k}\leq\Delta_k$ for all $\vecd_k\in\Dd_k$. If there exists a poll direction $\vecd_k\in\Dd_k$ such that $\vecx_k+\vecd_k$ improves the value of the function $z(\cdot;\rho_k)$, then the iteration is declared successful and the algorithm proceeds to iteration $k+1$, otherwise the iteration is declared unsuccessful and the frame size is reduced by a factor $\theta_\Delta\in(0,1)$. Whenever an iteration is unsuccessful, $\F_k$ is called a \textit{minimal frame}, and the incumbent point $\vecx_k$ is called a \textit{minimal frame center}. The novelty of the mixed penalty-barrier approach comes into play when iterations are unsuccessful.
In such case, {\bf \blue Step~3} is invoked. 

In {\bf \blue Step~3}, the algorithm verifies whether this measure is sufficiently small by balancing two quantities. 
The parameter $\rho_k$ represents how well the problem~\eqref{Pz} represents~\eqref{P0}. 
The closer $\rho_k$ is to zero, the better the representation. Therefore, we are not interested in providing a high precision for the stationarity measure if $\rho_k$ is large.
Furthermore, it is important to avoid trial points {outside of $\Omegalog$, where} the function $z(\cdot;\rho_k)$ is infinite. To achieve this, the method takes into account the value of $\phi^{\tt prox}(\vecx)$, which is a measure of proximity to the boundary of $\Omegalog$.
The criterion to determine if the frame size parameter is sufficiently small is
\begin{equation}\label{eq:penalty_criterion}
\Delta_{k+1} \ \leq \ \min\set{\rho_k^\beta,\, [\phi^{\tt prox}(\vecx_k)]^2}.
\end{equation}
If the latter is satisfied, the barrier-penalty parameter is decreased by a factor $\theta_\rho\in(0,1)$, otherwise it remains the same and the algorithm proceeds to the next iteration on the same subproblem ($P_{\rho_k}$). The criterion~\eqref{eq:penalty_criterion} ensures that if the parameter $\rho_k$ tends to zero, so does the frame size $\Delta_k$, at least on the subsequence where the criterion is satisfied. A trivial consequence of the updating rule of $\rho_k$ is that, if the criterion is satisfied for an infinite subset of iterations, then $\rho_k \to 0$.
Furthermore, we prove in \cref{prop:feasibility_limit} that~\eqref{eq:penalty_criterion} also ensures that, for $k$ large enough, the entire minimal frame belongs to the interior of $\Omegalog$, i.e., $\cint(\vecs)<0$ for all $\vecs\in\F_k$.

We conclude the algorithm description with brief comments on the initialization of the sets of indices $\Glog$ and $\Gext$. It is necessary that $g_\ell(\vecx_0)<0$ for all $\ell\in\Glog$, otherwise one could have $\cint(\vecx_0)\geq 0$ which may result in $z(\vecx_0;\rho_0)=+\infty$. This requirement can be satisfied by simply setting $\ell\in\Glog$ if $g_\ell(\vecx_0)<0$, and $\ell\in\Gext$ otherwise.
This strategy produces $\Glog=\emptyset$ when $g_\ell(\vecx_0)\geq0$ for all $\ell=1, 2, \dots, m$.

Finally, observe that the partition sets $\Glog$ and $\Gext$ may undergo a finite number of updates during the execution of the algorithm, as detailed in \cref{subsection::switch}.
The convergence analysis is then carried out on the sequence of iterations following the last change to the partition.
To simplify the notation and without loss of generality, 
we omitted the iteration subscript $k$ in the definition of the sets $\Glog$ and $\Gext$ in \cref{LOG-MADS-1}.

\section{Convergence analysis}
\label{sec-conv}

This section is devoted to the convergence properties of the sequence of iterates $\set{\vecx_k}$ produced by \cref{LOG-MADS-1}. The first part introduces the framework of Clarke's directional derivatives, which is a suitable concept for nonsmooth functions. Additionally, the \textit{path-refining} directions are defined. For such directions, it is possible to derive properties that can be exploited to understand the behaviour at the limit of the sequences generated by the Algorithm. Then, a subsection studies conditions under which the feasibility of end-path points can be ensured. 
Finally, the last subsection provides further conditions ensuring stationarity results.

\subsection{Preliminary results}

This section presents the key results required for the convergence analysis of \cref{LOG-MADS-1}.
We begin with a standard assumption ensuring that MADS behaves properly on the associated unconstrained subproblem.

\begin{assumption}
\label{ass:bounded_level_sets}
The lower level sets $\mathcal{L}_f(\alpha) = \{\vecx\in\R^n\mid f(\vecx) \leq \alpha \}$ are bounded for all $\alpha\in\R$.
\end{assumption}

\cref{ass:bounded_level_sets} implies that the lower level sets $\mathcal{L}_z(\alpha;\rho) = \{\vecx\in\R^n\mid z(\vecx;\rho) \leq \alpha \}$ are bounded for all $\rho>0$. In fact, by definition of $z(\cdot;\rho)$, one has $z(\vecx;\rho)\geq f(\vecx)$ for all $\vecx\in\R^n$ and all positive values of~$\rho$, which implies that $\Ll_z(\alpha;\rho)\subseteq \Ll_f(\alpha)$ for all $\alpha\in\R$ and $\rho\in(0,+\infty)$.

The convergence analysis in this work does not establish properties for the entire sequence of iterates generated by the algorithm. As in the case of Generalized Pattern Search (GPS) and MADS~\cite{AuHa2017}, where specific \textit{refining} subsequences are considered, we define an index set corresponding to the iterations at which the penalty parameter $\rho_k$ is updated -- these are the unsuccessful iterations for which convergence results are derived. 
They are named \textit{path-following} subsequences, a term adopted from the literature on interior-point methods (see, e.g.,~\cite{Bertsekas_1999}), to describe sequences of approximate solutions to the barrier subproblems. In addition, to guide the reader, we introduce a second index set that identifies the accumulation points associated with the sequence of incumbents.

\begin{definition}
The sequence $\set{\vecx_k}_{k\in \K_\rho}$ of iterates generated by~\cref{LOG-MADS-1}, corresponding to iterations where $\rho_{k+1}<\rho_k$, is said to be a path-following subsequence. Furthermore, a point $\barx$ is said to be an end-path point if there exists
$\K_\rho^{\tt x}\subseteq \K_\rho$ such that $\lim_{k\in\K_\rho^{\tt x}}\vecx_k = \bar{\vecx}$. In such case, $\set{\vecx_k}_{k\in\K_\rho^{\tt x}}$ is referred to as an end-path subsequence.
\end{definition}

{\bf \blue Step~3} of \cref{LOG-MADS-1} ensures that all iterations $k\in\K_\rho$ of a path-following subsequence are unsuccessful. Thus, since $\K_\rho^{\tt x}\subseteq \K_\rho$, we have for all $k\in\K_\rho^{\tt x}$
\begin{eqnarray}\label{eq:uns_zeta}
&z(\vecx_k+\vecd_k;\rho_k) \geq  z(\vecx_k;\rho_k)& {\mbox{for all}}~ \vecd_k\in\Dd_k,
\\
&\Delta_{k+1} \leq \min\set{\rho_k^\beta,[\phi^{\tt prox}(\vecx_{k})]^2} \quad
\text{ and }& \quad \Delta_{k+1} = \theta_\Delta\Delta_k. \nonumber
\end{eqnarray}
In the following, we refer to the set of successful iteration indexes as $\K_s$, and to the set of unsuccessful indexes as $\K_u$. Note that $\K_{\rho}^{\tt x}\subseteq\K_{\rho}\subseteq\K_u$. The first result to be shown is that the index set $\K_\rho$ is infinite, which we prove to be equivalent to showing that the penalty parameter $\rho_k$ goes to zero as $k\to\infty$. Therefore, we prove that the updating rule, i.e. 
$\Delta_{k+1} \le\min\{
\rho_k^{\beta},[\phi^{\tt prox}(\vecx_k)]^2\}$
with $k\in\K_u$, in {\bf \blue Step~3} of \cref{LOG-MADS-1}, is eventually satisfied for any value of $\rho$. Within the proof, we exploit the known results for MADS algorithms. In particular, let us state the following proposition. The proof is omitted as the result follows from~\cite[Proposition 3.4]{AuDe03a}.

\begin{proposition}
\label{prop:liminf_delta_mads}
Let \cref{ass:bounded_level_sets} be satisfied. 
If the sequence of penalty-barrier parameters $\set{\rho_k}$ produced by \cref{LOG-MADS-1} satisfies $\rho_k = \rho_0$ for all  $k \in \N$, then
$\liminf_{k\to\infty} \Delta_k = 0.$
\end{proposition}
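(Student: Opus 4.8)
The plan is to argue by contradiction, reading the hypothesis $\rho_k=\rho_0$ for all $k$ as the statement that \cref{LOG-MADS-1} reduces to plain MADS applied to the single unconstrained subproblem $\min_{\vecx}z(\vecx;\rho_0)$, so that this is exactly the classical ``the frame size has a vanishing $\liminf$'' result for MADS. Suppose then that $\liminf_{k\to\infty}\Delta_k=\Delta_*>0$, so that $\Delta_k\geq\underline{\Delta}:=\Delta_*/2>0$ for all $k$ large enough; discarding an initial segment, I assume this holds for all $k$. First I would confine the iterates to a bounded set: since $\vecx_0$ satisfies $g_\ell(\vecx_0)<0$ for all $\ell\in\Glog$ we have $\cint(\vecx_0)<0$ and $z(\vecx_0;\rho_0)<+\infty$, and because every successful iteration gives $z(\vecx_{k+1};\rho_0)<z(\vecx_k;\rho_0)$ while every unsuccessful one leaves $\vecx_{k+1}=\vecx_k$, the values $z(\vecx_k;\rho_0)$ are non-increasing. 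Hence all iterates lie in $\mathcal{L}_z(z(\vecx_0;\rho_0);\rho_0)\subseteq\mathcal{L}_f(z(\vecx_0;\rho_0))$, which is bounded by \cref{ass:bounded_level_sets}; let $R$ denote its diameter.

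Next I would derive a uniform upper bound on the frame size. At any successful iteration the displacement $\vecx_{k+1}-\vecx_k=\delta_k\vecz$ lies on $\M_k$ with $\vecz\in\Z^n\setminus\{0\}$, so $\norm{\vecz}\geq1$ and therefore $\delta_k\leq\norm{\vecx_{k+1}-\vecx_k}\leq R$. Since $\delta_k=\min\{\Delta_k,\Delta_k^2/\Delta_0\}$ equals $\Delta_k$ whenever $\Delta_k\geq\Delta_0$, this forces $\Delta_k\leq\max\{\Delta_0,R\}$ at every successful iteration. As the frame size increases only at successes (by the factor $\theta_\Delta^{-1}$) and otherwise contracts, a short induction yields $\Delta_k\leq\Delta_{\max}:=\theta_\Delta^{-1}\max\{\Delta_0,R\}$ for all $k$.

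Finally I would exploit the rationality of $\theta_\Delta$. Writing $\Delta_k=\theta_\Delta^{w_k}\Delta_0$ with $w_k\in\Z$, the two-sided bound $\underline{\Delta}\leq\Delta_k\leq\Delta_{\max}$ confines $w_k$ to finitely many integers, so $\delta_k$ takes finitely many values, each a rational multiple of $\Delta_0$. Hence there is a common grid $\delta_{\mathrm{com}}>0$ with every $\delta_k\in\delta_{\mathrm{com}}\N$, and since each incumbent move is a $\delta_k$-scaled integer vector, $\vecx_k\in\vecx_0+\delta_{\mathrm{com}}\Z^n$ for all $k$. The intersection of this fixed lattice with the bounded level set is a finite set of points, and the strictly decreasing values $z(\vecx_k;\rho_0)$ attained at successful iterations can visit such a finite set only finitely often; thus there are finitely many successful iterations. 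After the last one, every iteration is unsuccessful and $\Delta_{k+1}=\theta_\Delta\Delta_k$, so $\Delta_k\to0$, contradicting $\liminf_k\Delta_k=\Delta_*>0$ and proving $\liminf_{k\to\infty}\Delta_k=0$.

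I expect the main obstacle to be exactly the two structural facts that drive the contradiction and that are imported from \cite[Proposition~3.4]{AuDe03a}: the uniform upper bound on $\Delta_k$, and the reduction—enabled by $\theta_\Delta\in\Q$—of all the re-centered meshes $\M_k$ to a single fixed integer lattice. The latter is what converts ``iterates remain bounded'' into ``only finitely many distinct trial points are visited,'' and hence into ``only finitely many strict improvements,'' which is the crux of the argument; the rest is bookkeeping on the frame-size update rule.
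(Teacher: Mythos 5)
Your proposal is correct and follows essentially the same route as the paper: the paper omits the proof entirely, observing that under the hypothesis $\rho_k=\rho_0$ the method is plain MADS applied to the unconstrained problem $\min_{\vecx} z(\vecx;\rho_0)$ and citing \cite[Proposition~3.4]{AuDe03a}, and your argument (monotone $z$-values confine the iterates to a bounded level set via \cref{ass:bounded_level_sets}; a success bounds $\Delta_k$ from above; rationality of $\theta_\Delta$ puts all iterates on a single lattice, forcing finitely many successes and hence $\Delta_k\to 0$, a contradiction) is exactly the classical proof behind that citation. You adapt it correctly to this paper's specifics, namely the mesh recentered at $\vecx_k$ and the mesh size $\delta_k=\min\{\Delta_k,\Delta_k^2/\Delta_0\}$, so the reconstruction is faithful and complete.
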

Note that, when the penalty-barrier parameter is fixed to $\rho_0$ for all iterates, the algorithm is solving the unconstrained problem of minimizing $z(\vecx;\rho_0)$, and behaves as MADS. That is why it is not necessary to provide the proof of the latter proposition.

Let us now prove the following result on the convergence of the sequence of penalty-barrier parameters~$\set{\rho_k}$.

\begin{theorem}
\label{theo:convergence_tau}  
Let \cref{ass:bounded_level_sets} be satisfied. 
If $\set{\vecx_k}_{k\in\K_{\rho}}$ is a path-following subsequence generated by \cref{LOG-MADS-1}, then $\K_{\rho}$ is infinite and
\begin{equation}\label{eq:tau_to_0}
\displaystyle \lim_{k\to \infty} \rho_k=0.
\end{equation}
\end{theorem}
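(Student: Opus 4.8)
The plan is to first reduce the statement to a single assertion and then argue by contradiction. Since the penalty-barrier parameter is non-increasing and is multiplied by $\theta_\rho\in(0,1)$ exactly at the indices $k\in\K_\rho$, the set $\K_\rho$ is infinite if and only if $\rho_k\to 0$: if $\K_\rho$ were finite, $\rho_k$ would be eventually constant, equal to some $\bar\rho>0$ for all $k\geq K$. Hence it suffices to rule out this last scenario, and I would assume for contradiction that $\rho_k=\bar\rho$ for all $k\geq K$.

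Under this assumption the tail of the run (from iteration $K$ on) solves the fixed unconstrained subproblem $\min_{\vecx\in\R^n} z(\vecx;\bar\rho)$ and is an ordinary MADS instance, so \cref{prop:liminf_delta_mads} applies and yields $\liminf_{k\to\infty}\Delta_k=0$. The second, and crucial, ingredient is a uniform interiority estimate. Because the incumbent merit value is non-increasing once $\rho$ is frozen (successful iterations strictly decrease $z(\cdot;\bar\rho)$, unsuccessful ones leave the incumbent unchanged), one has $z(\vecx_k;\bar\rho)\leq z(\vecx_K;\bar\rho)=:\alpha$ for all $k\geq K$; by \cref{ass:bounded_level_sets} the iterates then remain in the bounded level set $\Ll_f(\alpha)$, and $f$ is therefore bounded below along them. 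Since $\cext\geq 0$, the bound $z(\vecx_k;\bar\rho)\leq\alpha$ forces the barrier term $-\bar\rho\log(-\cint(\vecx_k))$ to stay bounded above, which gives $\cint(\vecx_k)\leq-\delta<0$ for some $\delta>0$ and all $k\geq K$. Reading $\cint=-\prod_{\ell\in\Glog}\min\{1,-g_\ell\}$ as a product of factors in $(0,1]$, the product being bounded away from $0$ forces every factor to be $\geq\delta$, and therefore $\phi^{\tt prox}(\vecx_k)=\max_{\ell\in\Glog}g_\ell(\vecx_k)\leq-\delta$, i.e. $[\phi^{\tt prox}(\vecx_k)]^2\geq\delta^2>0$ uniformly in $k\geq K$.

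With these two facts the contradiction is immediate. Setting $\epsilon:=\min\{\bar\rho^\beta,\delta^2\}>0$, the right-hand side of the Step~3 test obeys $\min\{\bar\rho^\beta,[\phi^{\tt prox}(\vecx_k)]^2\}\geq\epsilon$ for all $k\geq K$. Since the frame size is contracted (by the factor $\theta_\Delta$) only at unsuccessful iterations and $\liminf_{k}\Delta_k=0$, there must exist an unsuccessful iteration $k\geq K$ with $\Delta_{k+1}\leq\epsilon$; at that iteration the criterion $\Delta_{k+1}\leq\min\{\bar\rho^\beta,[\phi^{\tt prox}(\vecx_k)]^2\}$ holds, so the algorithm sets $\rho_{k+1}=\theta_\rho\bar\rho<\bar\rho$, contradicting $\rho_k\equiv\bar\rho$ on $k\geq K$. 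This forces $\K_\rho$ to be infinite and hence $\rho_k\to 0$.

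I expect the main obstacle to be the interiority estimate of the second paragraph, namely showing that $[\phi^{\tt prox}(\vecx_k)]^2$ is bounded away from $0$. The danger is that the iterates could drift toward the boundary of $\Omegalog$, making $\phi^{\tt prox}(\vecx_k)\to 0$ and stalling the Step~3 test indefinitely while $\rho$ never decreases; the argument that prevents this is precisely the interplay between the logarithmic barrier (which blows up near $\partial\Omegalog$) and the bounded level sets of \cref{ass:bounded_level_sets}. A minor technical point to handle carefully is the passage from ``$\cint$ bounded away from $0$'' to ``$\phi^{\tt prox}$ bounded away from $0$'' through the product structure, together with the existence of an unsuccessful iteration with $\Delta_{k+1}$ arbitrarily small, which follows from the fact that the frame size decreases only at unsuccessful iterations.
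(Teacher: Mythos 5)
Your proof is correct and follows essentially the same route as the paper's: a contradiction argument assuming $\K_\rho$ is finite, an appeal to \cref{prop:liminf_delta_mads} for $\liminf_k \Delta_k = 0$, and the use of the bounded, non-increasing merit values to keep the iterates away from the boundary of $\Omegalog$ so that the Step~3 test must eventually trigger. The only difference is organizational — the paper takes limits along a subsequence and reaches the contradiction $z(\vecx_k;\rho_0)\to+\infty$, while you extract a uniform interiority bound $\delta$ and contradict the non-triggering of Step~3 directly — but these are mirror images of the same argument (and both share the same implicit use of $f$ being bounded below along the iterates).
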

\begin{proof}
By contradiction, assume that $\K_{\rho}$ is finite. 
Without loss of generality, assume that $\K_{\rho}=\emptyset$. 
Therefore, $\rho_k=\rho_0$ for all $k \in \N$, and \cref{prop:liminf_delta_mads} ensures that $\liminf_{k\to\infty} \Delta_k = 0$.

Since $\Delta_{k+1} < \Delta_{k}$ if and only if $k\in\K_u$, it follows that
there exists an infinite index set $\K_u^\Delta\subseteq\K_u$ such that
\begin{equation}\label{eq:delta_to_zero_tau_0}
\lim_{k\in\K_u^\Delta} \Delta_k = 0.
\end{equation}
The condition of {\bf \blue Step~3} is never satisfied, and therefore 
{\small $\Delta_{k+1}>\min\{{\rho}_0^{\beta},[\phi^{\tt prox}(\vecx_k)]^2\}>0$}
for all $k\in\K_u^\Delta$.
Taking the limit for $k\to\infty$ and $k\in\K_u^\Delta$ and using~\eqref{eq:delta_to_zero_tau_0}, it follows that
\begin{equation*}
\lim_{k\in\K_u^\Delta} \phi^{\tt prox}(\vecx_k) = 0,
\quad \mbox{ and } \quad
\lim_{k\in\K_u^\Delta} \cint(\vecx_k) = 0,
\quad \mbox{ and } \quad
\lim_{k\in\K_u^\Delta} z(\vecx_k,\rho_0)=+\infty.
\end{equation*}
This contradicts the fact that $z(\vecx_{k+1};\rho_0)\leq z(\vecx_k;\rho_0)\leq z(\vecx_0;\rho_0) < +\infty$. 
Thus, the index set $\K_\rho$ is infinite.

For any $s \in \N$, let $k_s \in \K_\rho$ be the $s$-th iteration index in the set $\K_\rho$. 
Therefore, it follows that 
$\rho_{k_s} = \theta_\rho \rho_{k_{s-1}} = \theta_{\rho}^s \rho_0$, and $\lim_{s\to\infty} \rho_{k_s} = \lim_{s\to\infty} \theta_{\rho}^s \rho_0 = 0,$
since $\theta_\rho \in (0,1)$.
The result follows by noticing that the sequence $\{\rho_k\}$ is non-increasing.
\end{proof}
The next result shows that the limit of frame-size parameters is equal to zero for path-following subsequences.
\begin{theorem}
\label{theo:convergence_delta} 
Let \cref{ass:bounded_level_sets} be satisfied. 
If $\set{\vecx_k}_{k\in\K_{\rho}}$ is a path-following subsequence generated by \cref{LOG-MADS-1}, and $\{\Delta_k\}_{k\in\K_{\rho}}$
is the sequence of frame sizes, then
\begin{equation}\label{eq:Delta_to_0}
\displaystyle \lim_{k \in \K_\rho}  \Delta_k=0.
\end{equation}
\end{theorem}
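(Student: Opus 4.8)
The plan is to read off, for a path-following index $k\in\K_\rho$, the two frame-size relations that membership in $\K_\rho$ guarantees, and then combine them with the convergence $\rho_k\to 0$ already supplied by \cref{theo:convergence_tau}. By construction, $k\in\K_\rho$ holds exactly when the test in {\bf \blue Step~3} of \cref{LOG-MADS-1} succeeds, so for every such $k$ one has both the penalty bound $\Delta_{k+1}\leq\rho_k^\beta$ (dropping the $[\phi^{\tt prox}(\vecx_k)]^2$ term from the minimum) and the unsuccessful-iteration update $\Delta_{k+1}=\theta_\Delta\Delta_k$. These are precisely the frame-size relations already recorded in~\eqref{eq:uns_zeta}, which hold verbatim on all of $\K_\rho$.

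First I would eliminate $\Delta_{k+1}$ between the two relations. Since $\theta_\Delta\in(0,1)$, dividing the update equality through and inserting the penalty bound gives, for all $k\in\K_\rho$,
\[
\Delta_k\ =\ \frac{\Delta_{k+1}}{\theta_\Delta}\ \leq\ \frac{\rho_k^\beta}{\theta_\Delta}.
\]
Next I would take the limit along $k\in\K_\rho$. As \cref{theo:convergence_tau} yields $\rho_k\to 0$ and $\beta>1>0$, the right-hand side tends to $0$; since $\Delta_k\geq 0$ for all $k$, a squeeze then gives $\lim_{k\in\K_\rho}\Delta_k=0$, which is the assertion.

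I expect no genuine obstacle in this argument: the substance has already been absorbed into \cref{theo:convergence_tau}, where $\K_\rho$ was shown to be infinite and $\rho_k$ was driven to zero. The only point worth highlighting is the structural role of the penalty criterion~\eqref{eq:penalty_criterion}, which couples the reduced frame size $\Delta_{k+1}$ directly to $\rho_k^\beta$; this coupling is exactly what transfers the vanishing of $\rho_k$ onto the frame sizes along the path-following subsequence. Everything else is the elementary chain of inequalities above.
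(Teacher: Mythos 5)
Your proof is correct and takes essentially the same route as the paper's: both drop the $[\phi^{\tt prox}(\vecx_k)]^2$ term to get $\Delta_{k+1}\leq\rho_k^{\beta}$ from the {\bf Step~3} criterion, invoke \cref{theo:convergence_tau} for $\rho_k\to 0$, and use the unsuccessful-iteration update $\Delta_{k+1}=\theta_\Delta\Delta_k$ to transfer the limit from $\Delta_{k+1}$ to $\Delta_k$. The only difference is cosmetic (you eliminate $\Delta_{k+1}$ before passing to the limit, the paper after).
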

\begin{proof}
\Cref{theo:convergence_tau} ensures that $\K_\rho$ is an infinite index set.
For every $k\in\K_\rho$, 
\begin{equation}\label{utile1}
\Delta_{k+1} \leq \min\{\rho_k^{\beta},[\phi^{\tt prox}(\vecx_k)]^{2}\}\leq \rho_k^{\beta},
\end{equation}
the same theorem guarantees that~\eqref{eq:tau_to_0} holds. 
Taking the limit as $k\to\infty$ and $k\in\K_\rho$ on~\eqref{utile1} gives
$\lim_{k\in\K_\rho}\Delta_{k+1} = 0$.
The algorithm sets $\Delta_{k+1} = \theta_{\Delta}\Delta_{k}$ for all $k \in \K_\rho\subseteq\K_u$, thus $\lim_{ k\in\K_\rho}\Delta_{k} = 0$.

\end{proof}

As a consequence of \cref{theo:convergence_delta}, we deduce that end-path points exist.
\begin{corollary}\label{cor:end_path_exist}
Let \cref{ass:bounded_level_sets} be satisfied. 
If $\set{\vecx_k}_{k\in\K_{\rho}}$ is a path-following subsequence generated by \cref{LOG-MADS-1}, then there exists at least one end-path point.
\end{corollary}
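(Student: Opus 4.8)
The plan is to reduce the statement to a compactness argument: I want to exhibit the sequence $\set{\vecx_k}_{k\in\K_\rho}$ as an infinite sequence confined to a bounded subset of $\R^n$, so that Bolzano--Weierstrass delivers a convergent subsequence $\set{\vecx_k}_{k\in\K_\rho^{\tt x}}$, and the limit is then an end-path point by the very definition. The first ingredient is free: \cref{theo:convergence_tau} guarantees that $\K_\rho$ is infinite, so $\set{\vecx_k}_{k\in\K_\rho}$ really is an infinite sequence and the only thing missing is boundedness.

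For boundedness I would lean on \cref{ass:bounded_level_sets}. Since $z(\vecx;\rho)\geq f(\vecx)$ for every $\vecx\in\R^n$ and every $\rho>0$, any uniform bound $z(\vecx_k;\rho_k)\leq\alpha$ forces $\vecx_k\in\Ll_f(\alpha)$, and $\Ll_f(\alpha)$ is bounded by assumption; hence it suffices to bound the merit values $\set{z(\vecx_k;\rho_k)}_{k\in\K_\rho}$. Within a single subproblem $(P_{\rho})$ the parameter $\rho$ is fixed and \cref{LOG-MADS-1} accepts only trial points that strictly decrease $z(\cdot;\rho)$ (leaving the incumbent untouched on unsuccessful iterations), so the merit is non-increasing there. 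The only events able to raise the merit are the penalty reductions indexed by $\K_\rho$, where $\rho$ is multiplied by $\theta_\rho\in(0,1)$ while the incumbent is held fixed. I would analyse this jump through the sign structure of the two terms: for $\vecx_k\in\Omegalog_{<}$ one has $\cint(\vecx_k)\in[-1,0)$, so $\log(-\cint(\vecx_k))\leq 0$ and the barrier contribution $-\rho\log(-\cint(\vecx_k))$ is non-negative and \emph{shrinks} when $\rho$ decreases, whereas the penalty contribution $\tfrac1\rho\cext(\vecx_k)$ is non-negative and \emph{grows}.

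The hard part is to keep this growth of the penalty term under control across the infinitely many reductions. The starting point is that \cref{ass:bounded_level_sets} makes $f$ coercive, hence bounded below by some $f_\star$, so nonnegativity of the barrier and penalty terms gives $\tfrac1{\rho_k}\cext(\vecx_k)\leq z(\vecx_k;\rho_k)-f_\star$; together with the fixed contraction factor $\theta_\rho$ this expresses the size of each jump in terms of the current merit value. I expect this to be the genuine obstacle, because the crude per-update estimate only controls the jump \emph{relative} to the present merit and therefore, if applied naively, permits geometric growth of $z(\vecx_k;\rho_k)$ across subproblems rather than a uniform bound. Closing this gap is where the real work lies: one must exploit more than ``minimal frame'' termination of each subproblem, showing that the exterior violation $\cext(\vecx_k)$ at the end-of-path incumbents decays fast enough relative to $\rho_k$ that $\set{z(\vecx_k;\rho_k)}_{k\in\K_\rho}$ remains bounded; alternatively one invokes a standing compactness of the iterate set inherited from the MADS machinery under \cref{ass:bounded_level_sets}.

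Once boundedness is secured, the conclusion is routine. The sequence $\set{\vecx_k}_{k\in\K_\rho}$ lies in a bounded subset of $\R^n$, so by Bolzano--Weierstrass there is an index set $\K_\rho^{\tt x}\subseteq\K_\rho$ and a point $\barx\in\R^n$ with $\lim_{k\in\K_\rho^{\tt x}}\vecx_k=\barx$. By definition $\set{\vecx_k}_{k\in\K_\rho^{\tt x}}$ is an end-path subsequence and $\barx$ is an end-path point, which is exactly the assertion of the corollary.
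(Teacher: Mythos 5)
Your overall route --- $\K_\rho$ is infinite by \cref{theo:convergence_tau}, the iterates $\set{\vecx_k}_{k\in\K_\rho}$ are bounded via $z(\cdot;\rho)\geq f(\cdot)$ and \cref{ass:bounded_level_sets}, and Bolzano--Weierstrass then produces an end-path point --- is exactly the argument the paper has in mind: the paper gives no proof at all, asserting the corollary as an immediate consequence of \cref{theo:convergence_delta}, and any such implicit argument must pass through the same boundedness step. The difference is that the paper treats boundedness of the path-following iterates as automatic, whereas you correctly observe that it is not --- and then you do not prove it. As submitted, your proposal therefore does not establish the corollary. Everything hinges on a uniform upper bound for $\set{z(\vecx_k;\rho_k)}_{k\in\K_\rho}$, and your own accounting shows the obstruction: within a subproblem the merit is non-increasing, but at each $k\in\K_\rho$ the update $\rho_{k+1}=\theta_\rho\rho_k$ (with $\vecx_{k+1}=\vecx_k$) raises the merit by at most $(\theta_\rho^{-1}-1)\,\cext(\vecx_k)/\rho_k$, and the only available estimate, $\cext(\vecx_k)/\rho_k\leq z(\vecx_k;\rho_k)-f(\vecx_k)$, yields a recursion of the form $z_{s+1}\leq\theta_\rho^{-1}z_s+C$, which permits geometric growth. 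Nothing in \cref{LOG-MADS-1} closes this loop: the termination test~\eqref{eq:penalty_criterion} constrains only $\Delta_{k+1}$, $\rho_k$ and $\phi^{\tt prox}(\vecx_k)$, never $\cext(\vecx_k)/\rho_k$. (A smaller inaccuracy: bounded level sets of an extended-real-valued $f$ do not by themselves give a finite lower bound $f_\star$; but this is secondary.)

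The gap is genuine in the strongest sense: the missing boundedness claim is not provable from the stated hypotheses, so neither the ``real work'' you defer nor the fallback ``standing compactness inherited from the MADS machinery'' exists without extra assumptions. Take $n=1$, $f(x)=\log(1+x^2)$ (so \cref{ass:bounded_level_sets} holds), one equality constraint $h(x)=1+e^{-x}$, so that $\cext(x)=(1+e^{-x})^2>1$ and \eqref{P0} is infeasible, and one inequality $g_1(x)=x-10^{100}$ assigned to $\Glog$, which remains inactive so that $\cint\equiv-1$ and the barrier term vanishes on the region visited. Then $z(x;\rho)=\log(1+x^2)+\cext(x)/\rho$ is unimodal with minimizer $x^{*}(\rho)$ solving roughly $xe^{-x}=\rho$, i.e.\ $x^{*}(\rho)\approx\log(1/\rho)\to+\infty$ as $\rho\searrow 0$; by unimodality every minimal frame center with parameter $\rho_k$ lies within $\Delta_k$ of $x^{*}(\rho_k)$, so the path-following iterates drift to $+\infty$ and no end-path point exists, even though \cref{theo:convergence_tau} and \cref{theo:convergence_delta} continue to hold. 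A complete proof therefore requires an additional hypothesis --- for instance that the iterates remain in a compact set, or that $\sup_{k\in\K_\rho}\cext(\vecx_k)/\rho_k<+\infty$, or a feasibility-type condition excluding such drift. Your instinct that this step is where the real work lies was exactly right; the honest conclusion is that, as stated, it cannot be completed --- by your argument or by the paper's.
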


The results of~\cref{theo:convergence_tau},~\cref{theo:convergence_delta}, and~\cref{cor:end_path_exist} are of paramount importance and they are used throughout the analysis to derive further properties. In the following, Equation~\eqref{eq:Delta_to_0} is exploited to show additional preliminary results on the path-following sequence generated by \cref{LOG-MADS-1}. In particular, the minimal frames are proved to belong to the interior of $\Omegalog$ in point $(i)$ of the proposition, which additionally allows to prove a relation between the different terms of $z(\cdot;\rho)$, facilitating the analysis of the generalized directional derivatives later in the paper.
\begin{proposition}
\label{prop:feasibility_limit}
Let \cref{ass:bounded_level_sets} be satisfied,
$\set{\vecx_k}_{k\in\K_{\rho}}$ be a path-following subsequence, 
and $\K_{\rho}^{\tt x}\subseteq\K_{\rho}$ be the index set of an end-path subsequence with end-path point $\bar{\vecx}$. 
If $\cint(\cdot)$ is Lipschitz continuous near $\barx$, then for all $k\in \K_{\rho}^{\tt x}$ sufficiently large, the following holds
\begin{eqnarray}
\mbox{ \quad (i) } && \cint(\vecx_k + \vecd) < 0 \mbox{ for any direction } \vecd \in \R^n \mbox{ such that } \vecx_k+\vecd \in \F_k; \nonumber \\
\mbox{ \quad (ii) }  
&& f(\vecp_k) - f(\vecx_k) + \tfrac{1}{\rho_k}\big( \cext(\vecp_k) - \cext(\vecx_k) \big) \geq  \tfrac{\rho_k}{\cint(\vecp_k)}\left(  \cint(\vecp_k)-\cint(\vecx_k)\right)
\label{eq:delta_inequality}
\end{eqnarray}
where $\vecd_k$ is any poll direction in $\Dd_k$, 
and $\vecp_k = \vecx_k+ \vecd_k$ is the corresponding poll point.
\end{proposition}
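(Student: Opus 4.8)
The plan is to prove part $(i)$ first and then use it to derive the inequality in $(ii)$, since $(ii)$ relies on the sign information of $\cint$ supplied by $(i)$.

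For part $(i)$, I would combine the Step~3 criterion with the Lipschitz continuity of $\cint$ near $\barx$. Since $\set{\vecx_k}_{k\in\K_\rho^{\tt x}}$ converges to $\barx$ and, by \cref{theo:convergence_delta}, $\Delta_k\to 0$ along $\K_\rho$, both the iterates and the entire frame $\F_k$ eventually lie inside a neighborhood of $\barx$ on which $\cint$ is Lipschitz with some constant $L$. For $k\in\K_\rho^{\tt x}$, the iteration is unsuccessful and the update rule guarantees $\Delta_{k+1}\leq[\phi^{\tt prox}(\vecx_k)]^2$, together with $\Delta_{k+1}=\theta_\Delta\Delta_k$, so that $\Delta_k\leq\theta_\Delta^{-1}[\phi^{\tt prox}(\vecx_k)]^2$. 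Because $\phi^{\tt prox}(\vecx_k)<0$ (as $\vecx_k\in\Omegalog_<$, which is where $z$ is finite), this forces $\cint(\vecx_k)$ to be bounded away from zero relative to the frame radius. Concretely, for any $\vecd$ with $\vecx_k+\vecd\in\F_k$ we have $\norm{\vecd}\leq\Delta_k$, so Lipschitz continuity yields $\cint(\vecx_k+\vecd)\leq\cint(\vecx_k)+L\Delta_k$. The main obstacle is bridging from $\phi^{\tt prox}$ to $\cint$: I would use the relation between $\cint$ and $\phi^{\tt prox}$ from the definition of $\cint$ (on $\Omegalog$, $\cint(\vecx_k)=-\prod_{\ell}\min\{1,-g_\ell\}\leq\phi^{\tt prox}(\vecx_k)<0$, or an analogous bound) to show $\cint(\vecx_k)\leq -c\,[\phi^{\tt prox}(\vecx_k)]$ or that $\cint(\vecx_k)$ dominates $L\Delta_k$ in magnitude. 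Since $\Delta_k$ is at most a constant multiple of $[\phi^{\tt prox}(\vecx_k)]^2$, which tends to zero faster than $\phi^{\tt prox}(\vecx_k)$ itself, the term $L\Delta_k$ becomes negligible compared to $|\cint(\vecx_k)|$ for large $k$, giving $\cint(\vecx_k+\vecd)<0$.

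For part $(ii)$, the key observation is that the poll point $\vecp_k=\vecx_k+\vecd_k$ lies in $\F_k$, so part $(i)$ guarantees $\cint(\vecp_k)<0$, ensuring $z(\vecp_k;\rho_k)$ is finite. Since $k\in\K_\rho^{\tt x}\subseteq\K_\rho$ corresponds to an unsuccessful iteration, the poll inequality $z(\vecp_k;\rho_k)\geq z(\vecx_k;\rho_k)$ from~\eqref{eq:uns_zeta} holds. Writing out both merit-function values using \cref{merit-z} and rearranging, the logarithmic terms contribute $-\rho_k[\log(-\cint(\vecp_k))-\log(-\cint(\vecx_k))]$. I would then bound this log-difference using concavity of the logarithm: for the concave function $\log$, one has $\log(a)-\log(b)\leq \tfrac{1}{b}(a-b)$, applied with $a=-\cint(\vecp_k)$ and $b=-\cint(\vecx_k)$ — or more directly, the tangent-line inequality $\log(a)-\log(b)\leq\frac{a-b}{b}$ rearranges to produce the factor $\tfrac{\rho_k}{\cint(\vecp_k)}$ appearing on the right-hand side. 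The precise choice of expansion point (whether to linearize at $\vecp_k$ or at $\vecx_k$) determines whether $\cint(\vecp_k)$ or $\cint(\vecx_k)$ appears in the denominator; I would select the one yielding exactly $\cint(\vecp_k)$ as stated.

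The main obstacle I anticipate is part $(i)$, specifically making rigorous the claim that $\Delta_k$ (of order $[\phi^{\tt prox}(\vecx_k)]^2$) is eventually dominated by $|\cint(\vecx_k)|$ (of order comparable to $|\phi^{\tt prox}(\vecx_k)|$). This requires care because $\phi^{\tt prox}(\vecx_k)$ may itself tend to zero along the subsequence, so one cannot simply treat $\cint(\vecx_k)$ as bounded away from zero by a fixed constant; instead the argument is genuinely asymptotic, comparing the rates of a quantity and its square. Establishing the correct comparison between $\cint$ and $\phi^{\tt prox}$ near $\barx$, and verifying that the squared quantity in the Step~3 criterion indeed controls the Lipschitz error term, is the crux of the proof.
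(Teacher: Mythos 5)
Part (ii) of your plan is correct and is essentially the paper's argument: rearrange the unsuccessful-poll inequality $z(\vecp_k;\rho_k)\geq z(\vecx_k;\rho_k)$ and bound the logarithmic difference by the tangent-line inequality (equivalently $\log t\leq t-1$), linearizing at $-\cint(\vecp_k)$ so that $\cint(\vecp_k)$ appears in the denominator. The genuine gap is in part (i), and it lies precisely at the point you flagged as the crux: your proposed bridge between $\cint$ and $\phi^{\tt prox}$ goes the wrong way. On $\Omegalog$ one has $\cint(\vecx)\geq\phi^{\tt prox}(\vecx)$, not $\leq$: every factor $\min\{1,-g_\ell(\vecx)\}$ lies in $(0,1]$ and the factor indexed by the maximizing constraint is at most $-\phi^{\tt prox}(\vecx)$, so $-\cint(\vecx)\leq-\phi^{\tt prox}(\vecx)$, i.e.\ $|\cint|\leq|\phi^{\tt prox}|$ (for instance, $g_1=g_2=-0.1$ gives $\cint=-0.01>-0.1=\phi^{\tt prox}$). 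Consequently the domination you need, $|\cint(\vecx_k)|\gg L\Delta_k$, fails whenever two or more constraints in $\Glog$ are simultaneously near-active: if three constraints all equal $-\epsilon_k\to 0^-$ at $\vecx_k$, then $|\cint(\vecx_k)|=\epsilon_k^3$ while {\bf Step~3} only guarantees $\Delta_k\leq\theta_\Delta^{-1}\epsilon_k^2$, so your bound
\begin{equation*}
\cint(\vecx_k+\vecd)\ \leq\ \cint(\vecx_k)+L\Delta_k\ \leq\ -\epsilon_k^3+\tfrac{L}{\theta_\Delta}\epsilon_k^2
\end{equation*}
has a \emph{positive} right-hand side for large $k$ and yields nothing. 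The fixed Lipschitz constant $L$ of $\cint$ is too pessimistic here; the actual variation of the product $\cint$ over the frame is much smaller than $L\Delta_k$ near such points, but your argument cannot see that.

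The paper avoids the comparison altogether by never applying the Lipschitz bound to $\cint$: since $\phi^{\tt prox}(\vecy)<0$ if and only if $\cint(\vecy)<0$, it suffices to prove $\phi^{\tt prox}(\vecx_k+\vecd)<0$, and the entire estimate is run on $\phi^{\tt prox}$ itself (which is Lipschitz near $\barx$, with constant $L_\phi$ say). Then for any $\vecd$ with $\vecx_k+\vecd\in\F_k$,
\begin{equation*}
\phi^{\tt prox}(\vecx_k+\vecd)\ \leq\ \phi^{\tt prox}(\vecx_k)+\tfrac{L_\phi}{\theta_\Delta}[\phi^{\tt prox}(\vecx_k)]^2\ =\ \phi^{\tt prox}(\vecx_k)\left(1+\tfrac{L_\phi}{\theta_\Delta}\phi^{\tt prox}(\vecx_k)\right)\ <\ 0
\end{equation*}
for $k\in\K_\rho^{\tt x}$ large, because $\phi^{\tt prox}(\vecx_k)\to 0^-$ makes the parenthesized factor positive. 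Here the ``quantity versus its square'' race is between $\phi^{\tt prox}(\vecx_k)$ and its \emph{own} square, which always succeeds; your version pits $|\cint(\vecx_k)|$ against the square of a different, generally larger quantity, which does not. To repair your proof, replace the Lipschitz estimate on $\cint$ by the same estimate on $\phi^{\tt prox}$ and invoke only the sign equivalence between the two functions.
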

\begin{proof}
\noindent (i) We first prove the result for $\phi^{\tt prox}(\vecx_k+\vecd)$ and then deduce that it also holds for $\cint(\vecx_k+\vecd)$. Indeed, for any $\vecy \in \mathbb{R}^n$, $\phi^{\tt prox}(\vecy) < 0$ if and only if $\cint(\vecy) < 0$, and $\phi^{\tt prox}(\vecy) = 0$ if and only if $\cint(\vecy) = 0$.
Let $\vecd \in \R^n$ be such that $\vecx_k+\vecd\in\F_k$. 
By the definition of $\F_k$, $\norm{\vecd} \leq \Delta_k$.
Since $\displaystyle\lim_{k\in\K_{\rho}^{\tt x}}\vecx_k = \barx$, and
$\displaystyle\lim_{ k\in\K_{\rho} }\Delta_k = 0$ (from~\cref{theo:convergence_delta}), it follows that
$$
\displaystyle\lim_{ k\in\K_{\rho}^{\tt x}} \phi^{\tt prox}(\vecx_k) \ = \ \displaystyle\lim_{k\in\K_{\rho}^{\tt x}} \phi^{\tt prox}(\vecx_k+\vecd)\  = \ \phi^{\tt prox}(\bar\vecx) \ \leq \ 0,
$$
where the last inequality follows from the fact that $\cint(\vecx_k)<0$ for all $k$, and, therefore, $\phi^{\tt prox}(\vecx_k)<0$ for all $k$. 
Equation (i) follows trivially when $\cint(\bar\vecx) < 0$, so we consider the situation where $\phi^{\tt prox} (\barx)=\cint(\bar\vecx) = 0$.
All iterations $k\in \K_{\rho}^{\tt x}$ are unsuccessful, 
and so  $\vecx_{k+1}=\vecx_k$,
$\Delta_{k+1}=\theta_{\Delta}\Delta_k$, with $\theta_{\Delta}\in (0,1)$,
and~\eqref{eq:penalty_criterion} ensures that $\Delta_{k+1} \leq \phi^{\tt prox}(\vecx_k)^2$
Since $\cint(\cdot)$ is Lipschitz near $\barx$, we deduce that  $\phi^{\tt prox}(\cdot)$
is also Lipschitz near $\barx$. Thus, there exists a constant $L_\phi >0$ such that for all $k\in\K_\rho^{\tt x}$ sufficiently large
{\small
$$|\phi^{\tt prox}(\vecx_k) - \phi^{\tt prox}(\vecx_k + \vecd)| 
\ \leq L_{\phi} \ \|\vecd\| 
\ \leq \ L_{\phi} \Delta_k
\ = \ \frac{L_\phi \Delta_{k+1}}{\theta_\Delta}
\ \leq \ \frac{L_\phi [\phi^{\tt prox}(\vecx_k)]^2}{\theta_\Delta}.
$$ }
It follows that
\begin{eqnarray*}
\phi^{\tt prox}(\vecx_k + \vecd) &\leq&  \frac{L_{\phi}}{\theta_{\Delta}} \phi^{\tt prox}(\vecx_k)^2 + \phi^{\tt prox}(\vecx_k)
\ =\  \phi^{\tt prox}(\vecx_k) \left(\frac{L_{\phi}}{\theta_{\Delta}} \phi^{\tt prox}(\vecx_k) + 1\right). 
\end{eqnarray*}

The right-hand-side is strictly negative when $k\in\K_{\tt \rho}^{\tt x}$ is large, since
{\small
$\phi^{\tt prox}(\vecx_k) < 0 = \phi^{\tt prox}(\bar\vecx)$, $L_\phi>0$}, and $\theta_\Delta>0$.
Thus, the entire frame is strictly feasible with respect to the constraints treated by the logarithmic barrier, which proves (i).

\noindent (ii) Consider $k\in\K_{\tt \rho}^{\tt x}$, $\vecd_k\in\Dd_k$ and set $\vecp_k = \vecx_k + \vecd_k$.
Equation~\eqref{eq:uns_zeta} can be rewritten as
\begin{equation*}
f(\vecp_k) - f(\vecx_k) + \tfrac{1}{\rho_k}\big( \cext(\vecp_k) - \cext(\vecx_k) \big) + \rho_k\log\left(\frac{-\cint(\vecx_k)}{-\cint(\vecp_k)}
\right) \ \geq \ 0.
\end{equation*}
Using the fact that $\cint(\vecx_k)< 0$, that (i) holds for $\vecd = \vecd_k$ when $k$ is large, and since $\log(t)\leq t-1$ for all $t>0$, it follows that
\begin{equation*}
\log\left(\frac{-\cint(\vecx_k)}{-\cint(\vecp_k)}\right) \ \leq \ \dfrac{1}{-\cint(\vecp_k)}\left(  \cint(\vecp_k)-\cint(\vecx_k)\right).
\end{equation*}
Combining the two inequalities concludes the proof.
\end{proof}
For a function \(f:\R^n\to\overline{\R}\), the Clarke generalized directional derivative~\cite{Clarke_1990} of $f$ at $\vecx \in \R^n$ along a direction $\bard \in \R^n$  is given by
$$f^\circ(\vecx;\bard) \ = \ \limsup_{
\substack{\vecy\to \vecx\\ t\searrow 0}
}\, \frac{f(\vecy+t\bard) - f(\vecy)}{t}.$$

The generalized derivative proposed by Clarke is a powerful tool for the analysis of nonsmooth optimization problems. 
When the function $f$ is Lipschitz near a point $\vecx$, then the superior limit exists and is finite, see~\cite[Chapter 2]{Clarke_1990}.
The following technical result~\cite[Proposition 3.9]{AuDe2006} is used in the theoretical analysis. 
\begin{proposition}
\label{prop:dk_to_d}
Let $f:\R^n\to\R$ be Lipschitz continuous near $\vecx$. Then
\begin{equation}
\label{eq:dk_to_d}
f^\circ(\vecx,\vecd)= \limsup_{
\substack{\vecy\to \vecx \\ t\searrow 0 \\ \vecw\to\vecd}
}\, \frac{f(\vecy+t\vecw) - f(\vecy)}{t} \mbox{ .}
\end{equation}
\end{proposition}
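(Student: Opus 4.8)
The plan is to establish the two inequalities whose conjunction yields the equality. Write $R$ for the right-hand side of \eqref{eq:dk_to_d}. One direction is immediate: the $\limsup$ defining $f^\circ(\vecx;\vecd)$ ranges over admissible sequences in which $\vecw\equiv\vecd$ is held fixed, and these form a subclass of the sequences allowed in $R$ (where $\vecw$ may vary with $\vecw\to\vecd$). Since enlarging the family of sequences over which a $\limsup$ is taken can only increase its value, we get $f^\circ(\vecx;\vecd)\le R$ without using any hypothesis on $f$.

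The substantive direction is $R\le f^\circ(\vecx;\vecd)$, and this is where local Lipschitz continuity is used. First I would record that $R$ is finite: for any admissible triple sequence $(\vecy_k,t_k,\vecw_k)$ with $\vecy_k\to\vecx$, $t_k\searrow 0$, $\vecw_k\to\vecd$, both $\vecy_k$ and $\vecy_k+t_k\vecw_k$ eventually lie in the neighbourhood where $f$ is $L$-Lipschitz, whence the difference quotient is bounded in absolute value by $L\|\vecw_k\|$, which stays near $L\|\vecd\|$. This rules out the degenerate $+\infty$ case and lets me extract a sequence along which the quotients converge to $R$.

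The heart of the argument is then to compare, along such a sequence, the quotient built from the varying direction $\vecw_k$ with the one built from the fixed direction $\vecd$. The key estimate is
\[
\left| \frac{f(\vecy_k + t_k \vecw_k) - f(\vecy_k)}{t_k} - \frac{f(\vecy_k + t_k \vecd) - f(\vecy_k)}{t_k} \right| \ \le\ \frac{L\,\|t_k\vecw_k - t_k\vecd\|}{t_k} \ =\ L\,\|\vecw_k - \vecd\| \ \to\ 0,
\]
which follows by applying the Lipschitz bound to the pair of points $\vecy_k+t_k\vecw_k$ and $\vecy_k+t_k\vecd$, both of which converge to $\vecx$ and so lie in the Lipschitz neighbourhood for $k$ large. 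Consequently the two families of quotients share the same limiting behaviour, so $R$ equals the limit (along this subsequence) of the quotients that use the fixed direction $\vecd$; but the pairs $(\vecy_k,t_k)$ are admissible in the definition of $f^\circ(\vecx;\vecd)$, and hence that limit is at most $f^\circ(\vecx;\vecd)$.

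The only delicate point — the step I would be most careful to state cleanly — is the $\limsup$ bookkeeping: one must confirm that the perturbation term $L\|\vecw_k-\vecd\|$ vanishes, so that the additive $o(1)$ correction does not change the $\limsup$, allowing it to be transferred from the $\vecw_k$-quotients to the $\vecd$-quotients. Since this correction converges to zero, the $\limsup$ is preserved, which gives $R\le f^\circ(\vecx;\vecd)$ and completes the proof.
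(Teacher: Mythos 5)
Your proof is correct. Note that the paper itself does not prove this statement: it invokes it as a known technical result, citing Proposition~3.9 of the MADS paper \cite{AuDe2006}. Your argument is essentially the standard one given there --- the trivial inequality $f^\circ(\vecx;\vecd)\le R$ from enlarging the class of sequences, and the reverse inequality by using the Lipschitz constant to swap the varying direction $\vecw_k$ for the fixed direction $\vecd$ at the cost of an error term $L\|\vecw_k-\vecd\|\to 0$, which cannot affect the $\limsup$ --- so there is nothing to correct.
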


The next definition introduces formally the \textit{path-refining} directions. 
These are similar to the \textit{refining} directions introduced in~\cite{AuDe2006}, with the difference that the behaviour of the values of $\cint(\cdot)$ is considered.

\begin{definition}\label{def:path_refining}
Let $\set{\vecx_k}_{k\in\K_{\rho}}$ be a path-following subsequence, 
and $\K_{\rho}^{\tt x}\subseteq\K_{\rho}$ be the index set of an end-path subsequence with end-path point $\bar{\vecx}$ such that the normalized sequence of poll directions $\{\vecd_k\} \subset \Dd_k$ converges to some direction $\bard \in \R^n$, i.e.,
$\displaystyle\lim_{k\in\K_\rho^{\tt x}}\dfrac{\vecd_k}{\norm{\vecd_k}} \ = \ \bard$.
Then, $\bard$ is said to be a path-refining direction for $\bar{\vecx}$, if
\begin{equation}
\label{eq:c_pathrefining_d}
\cint(\vecx_k+\vecd_k)\ \leq\ \cint(\vecx_k) \quad \mbox{ for all } k \in \K_\rho^{\tt x},
\end{equation}
\end{definition}
\begin{remark}\label{remark:negative_clarke_path_refining}
Suppose that \(c^\circ(\vecx;\vecd)<0\).  
Then, by the definition of the Clarke directional derivative,  
for any sequences \(\{\vecx_k\}\to\vecx\) and \(\{\vecd_k\}\) satisfying  
\[
\lim_{k \to \infty}\frac{\vecd_k}{\|\vecd_k\|} \;=\; \vecd
\quad\text{and}\quad
\|\vecd_k\|\to 0,
\]
there exists \(k_0\) such that
$c(\vecx_k+\vecd_k) \le c(\vecx_k)$
for all $k\ge k_0$.
Thus, setting \(\K_\rho^{\tt x} := \{k : k \ge k_0\}\) implies that  
the vector \(\vecd\) is a path-refining direction for \(\vecx\).
\end{remark}

It is possible to derive a property concerning the violation function $\cext(\cdot)$, defined in~\eqref{eq:violation_function}, along path-refining directions. Such a property is presented and proved in the following proposition.
\begin{proposition}\label{prop:v_clarke_nonnegative}
Let \cref{ass:bounded_level_sets} be satisfied, $\set{\vecx_k}_{k\in\K_{\rho}}$ be a path-following subsequence, with end-path point $\bar{\vecx}$, and let $\vecd$ be a path-refining direction. If $f, \cint$, and $\cext$ are Lipschitz continuous near $\barx$, then
\begin{equation}\label{eq:v_clarke_nonnegative}
(\cext)^\circ(\barx;\vecd) \ \geq \ 0.
\end{equation}
\end{proposition}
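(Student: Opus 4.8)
The plan is to combine the key inequality from \cref{prop:feasibility_limit}(ii) with the path-refining condition, and then pass to the Clarke generalized derivative via \cref{prop:dk_to_d}. First I would fix the end-path subsequence index set $\K_\rho^{\tt x}$ and, for each $k\in\K_\rho^{\tt x}$ sufficiently large, write $\vecp_k=\vecx_k+\vecd_k$ and analyze the right-hand side of \eqref{eq:delta_inequality}, namely $\tfrac{\rho_k}{\cint(\vecp_k)}\big(\cint(\vecp_k)-\cint(\vecx_k)\big)$. By part (i) of the same proposition, $\cint(\vecp_k)<0$ for large $k$, while the path-refining property \eqref{eq:c_pathrefining_d} gives $\cint(\vecp_k)-\cint(\vecx_k)\leq 0$. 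Hence the quotient of a nonpositive numerator by a strictly negative denominator is nonnegative, and multiplying by $\rho_k>0$ shows that the right-hand side of \eqref{eq:delta_inequality} is $\geq 0$.

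Consequently \eqref{eq:delta_inequality} reduces to $f(\vecp_k)-f(\vecx_k)+\tfrac{1}{\rho_k}\big(\cext(\vecp_k)-\cext(\vecx_k)\big)\geq 0$, which I would rearrange (multiplying by $\rho_k>0$ and dividing by $\norm{\vecd_k}>0$, the latter being positive since the normalized directions converge to $\vecd$) into the lower bound
$$\frac{\cext(\vecp_k)-\cext(\vecx_k)}{\norm{\vecd_k}} \ \geq \ \frac{\rho_k\big(f(\vecx_k)-f(\vecp_k)\big)}{\norm{\vecd_k}}.$$
The next step is to control the right-hand side. Using the Lipschitz continuity of $f$ near $\barx$ with constant $L_f$ together with $\norm{\vecp_k-\vecx_k}=\norm{\vecd_k}$, I obtain $|f(\vecx_k)-f(\vecp_k)|\leq L_f\norm{\vecd_k}$, so the right-hand side is bounded in absolute value by $\rho_k L_f$. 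Since $\rho_k\to 0$ by \cref{theo:convergence_tau}, this quantity tends to $0$.

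Finally I would pass to the limit. As $k\to\infty$ along $\K_\rho^{\tt x}$ we have $\vecx_k\to\barx$, $\norm{\vecd_k}\leq\Delta_k\to 0$ (by \cref{theo:convergence_delta}), and $\vecd_k/\norm{\vecd_k}\to\vecd$. Setting $\vecy_k=\vecx_k$, $t_k=\norm{\vecd_k}$, and $\vecw_k=\vecd_k/\norm{\vecd_k}$, the corresponding difference quotient equals $\tfrac{\cext(\vecp_k)-\cext(\vecx_k)}{\norm{\vecd_k}}$, so \cref{prop:dk_to_d} applied to the Lipschitz function $\cext$ gives
$$(\cext)^\circ(\barx;\vecd) \ \geq \ \limsup_{k\in\K_\rho^{\tt x}}\frac{\cext(\vecp_k)-\cext(\vecx_k)}{\norm{\vecd_k}} \ \geq \ \lim_{k\in\K_\rho^{\tt x}}\frac{\rho_k\big(f(\vecx_k)-f(\vecp_k)\big)}{\norm{\vecd_k}} \ = \ 0,$$
which is exactly \eqref{eq:v_clarke_nonnegative}.

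I expect the main obstacle to be the careful sign analysis of the right-hand side of \eqref{eq:delta_inequality}: one must simultaneously invoke strict interior feasibility of the poll point (part (i)) and the path-refining inequality to guarantee nonnegativity, while keeping track of the inequality direction when dividing by the negative quantity $\cint(\vecp_k)$. The remainder is a routine combination of the Lipschitz bound on $f$, the fact that $\rho_k\to 0$, and the characterization of the Clarke derivative in \cref{prop:dk_to_d}.
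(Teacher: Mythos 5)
Your proposal is correct and follows essentially the same route as the paper's proof: both use the path-refining inequality together with \cref{prop:feasibility_limit} to show the right-hand side of \eqref{eq:delta_inequality} is nonnegative, then kill the objective-function term via Lipschitz boundedness and $\rho_k\to 0$, and conclude with \cref{prop:dk_to_d}. The only difference is presentational—you isolate the $\cext$ difference quotient and bound the remainder explicitly by $\rho_k L_f$, whereas the paper takes the superior limit of the sum directly—but the argument is the same.
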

\begin{proof}
By the definition of path-refining direction, a set $\K_\rho^{\tt x}$ exists such that {\small $\displaystyle\lim_{k\in\K_\rho^{\tt x}}\dfrac{\vecd_k}{\norm{\vecd_k}} = \bard$} and~\eqref{eq:c_pathrefining_d} hold.
\Cref{prop:feasibility_limit} ensures $\cint(\vecx_k+\vecd_k)<0$ for all $\vecd_k\in\Dd_k$ and $k\in\K_\rho^{\tt x}$ sufficiently large. Recalling $\rho_k>0$ for all $k$, and using~\eqref{eq:c_pathrefining_d}, it follows for $k\in\K_\rho^{\tt x}$ sufficiently large
\begin{equation*}
\tfrac{\rho_k}{\cint(\vecx_k+ \vecd_k)}\left(  \cint(\vecx_k+\vecd_k)-\cint(\vecx_k)\right) \ \geq \ 0.
\end{equation*}
Thus, using~\eqref{eq:delta_inequality} from \cref{prop:feasibility_limit}, for sufficiently large $k\in\K_\rho^{\tt x}$, it holds
\begin{equation*}
f(\vecx_k+\vecd_k) - f(\vecx_k)  + \frac{1}{\rho_k}\big( \cext(\vecx_k+\vecd_k) - \cext(\vecx_k) \big) \geq 0.
\end{equation*}
Dividing both sides of the latter equation by $\norm{\vecd_k}$, multiplying by $\rho_k$ and taking the superior limit for $k\in\K_\rho^{\tt x}$
\begin{equation}\label{eq:lim_sup_v_1}
\limsup_{ k\in\K_\rho^{\tt x}} \left[ \rho_k\dfrac{f(\vecx_k+ \vecd_k) - f(\vecx_k)}{\norm{\vecd_k}} + \dfrac{\cext(\vecx_k+ \vecd_k)-\cext(\vecx_k)}{\norm{\vecd_k}}\right] \geq 0.
\end{equation}
Since $f(\cdot)$ is Lipschitz around $\barx$, then the sequence {\small $\displaystyle \left\{\dfrac{f(\vecx_k+\vecd_k) - f(\vecx_k)}{\norm{\vecd_k}}\right\}$} is bounded. Recalling that, by \cref{theo:convergence_tau},  $\lim_{k\to\infty} \rho_k=0$, we can write
\begin{equation*}
\limsup_{k\in\K_\rho^{\tt x}} \dfrac{\cext(\vecx_k+ \vecd_k)-\cext(\vecx_k)}{\norm{\vecd_k}} \geq 0.
\end{equation*}
Using the fact that $\cext(\cdot)$ is Lipschitz near $\barx$, by \cref{prop:dk_to_d}, we can write
\begin{equation*}
(\cext)^\circ(\barx;\vecd) \geq \limsup_{ k\in\K_\rho^{\tt x}} \dfrac{\cext(\vecx_k+ \vecd_k)-\cext(\vecx_k)}{\norm{\vecd_k}} \geq 0,
\end{equation*}
proving~\eqref{eq:v_clarke_nonnegative}, and concluding the proof.
\end{proof}
The result of \cref{prop:v_clarke_nonnegative} is used to define conditions to ensure feasibility for the end-path points with respect to Problem~\eqref{P0}. Note that any sequence $\set{\vecx_k}$ of incumbent points generated by the algorithm is such that $\cint(\vecx_k)<0$ for all $k$, so that any limit point $\barx$ of the sequence belongs to $\Omegalog$. Therefore, in order to show that $\barx$ belongs to $\Omega$, one has to show that $\cext(\barx)=0$. Since $\cext(\vecx)\geq 0$ for all $\vecx\in\R^n$, any point $\barx$ such that $\cext(\barx)=0$ is a global minimum of $\cext(\cdot)$, i.e., for any $\epsilon>0$, any $\vecy\in\B(\barx,\epsilon)$ is such that $\cext(\vecy)\geq \cext(\barx)$, where $\B(\barx,\epsilon)$ is a ball centred at $\barx$. It trivially follows that $(\cext)^\circ(\barx;\vecd)\geq 0$ for all $\vecd\in\R^n$. If, on the other hand, $\cext(\barx)>0$, there might exist directions $\vecd\in\R^n$ such that $(\cext)^\circ(\barx;\vecd)<0$.
In view of \cref{prop:v_clarke_nonnegative}, such directions cannot be path-refining directions.
\subsection{Feasibility}
In this subsection, results involving all the introduced objects are shown, which allow to define conditions under which the end-path points generated by \cref{LOG-MADS-1} are feasible for Problem~\eqref{P0}.

The following provides the definition of the hypertangent cone~\cite{Clarke_1990}.

\begin{definition}[Hypertangent Cone]
\label{def:hypertangent_cone}
A vector $\vecd\in\R^n$ is said to be hypertangent to $\Omega$ at $\vecx$ if there exist $\epsilon>0$ such that
\begin{equation*}
\vecy+t\vecw\in\Omega \text{ for all }\vecy\in\Omega\cap \B(\vecx,\epsilon), \quad \vecw\in \B(\vecd,\epsilon), \quad t\in(0,\epsilon).
\end{equation*}
\end{definition}
The set of all hypertangent vectors to $\Omega$ at $\vecx$ is called the hypertangent cone, denoted as $\T^H_{\Omega}(\vecx)$. The hypertangent cone appears in a necessary optimality condition for a solution $\vecx\in\Omega$ to be a local minimizer of Problem~\eqref{P0}, for more details, see~\cite[Theorem~6.10]{AuHa2017}. In fact, if $f$ is Lipschitz continuous near a local minimizer $\vecx\in\Omega$ for Problem~\eqref{P0}, then $f^\circ(\vecx;\vecd)\geq 0$ for every direction $\vecd\in \T^H_\Omega(\bar{\vecx})$.

The next definition introduces a first constraint qualification, which we will use in the analysis of feasibility.

\begin{definition}[Feasibility Constraint Qualification]
\label{cq:fcs} 
A point $\vecx\in\R^n$  is said to satisfy the feasibility constraint qualification  (FCQ) for problem~\eqref{P0}, if the following conditions hold:
\begin{itemize}
\item[(a)]
$\cint(\cdot)$ and $\cext(\cdot)$ are Lipschitz continuous near $\vecx$;

\item[(b)] 
either $\vecx \in \Omega$,  or $\vecx \in \Omegalog\setminus\Omegaext$ and there exists a direction $\vecd \in \R^n$ such that $(\cext)^\circ(\vecx; \vecd) < 0$ and
\begin{equation}\label{eq:cq_notfeasible_fcq}
(\cint)^\circ(\vecx; \vecd) < 0  \quad \text{ if } \cint(\vecx) = 0.
\end{equation}
\end{itemize}
\end{definition}
The FCQ, as introduced in \cref{cq:fcs}, is designed to ensure that, at any candidate limit point $\vecx$, the local geometry of the constraint system is sufficiently regular to guarantee that the algorithmic path can approach feasibility. This is particularly important in the nonsmooth, nonconvex setting, where classical constraints qualifications such as MFCQ~\cite{Mangasarian67},  may not apply or may fail to capture the subtleties of the problem structure. At its core, the FCQ requires the point to be feasible with respect to the exterior constraints. Otherwise, for points that are infeasible with respect to the exterior violation function, i.e., when $\cext(\vecx) > 0$, additional conditions are required. When using a penalty approach to deal with nonlinear constraints, the problem of feasibility can be translated as the minimization of the norm of constraint violations, e.g. $v(\vecx) = \sum_{\ell=1}^m [g_\ell^{+}(\vecx)]^2 + \sum_{j=1}^p [h(\vecx)]^2$. When mixing the penalty approach with an interior violation function, the latter translation does not properly reflect the constraint-handling strategy as the merit function is not defined for all $\vecx\in\R^n$. A coherent representation of the feasibility problem when using the proposed PIP strategy would be
\begin{align}
\displaystyle\min_{\vecx \in \mathbb{R}^n} \quad & \cext(\vecx) \label{P_feas}\tag{$P_{feas}$}\\
\text{s.t.} \quad & \cint(\vecx)\leq 0.\nonumber
\end{align}
The feasible region of~\eqref{P_feas} is $\Omegalog$. The optimization algorithm may stall at points that are stationary for~\eqref{P_feas}, thus, points such that $(\cext)^\circ(\vecx;\vecd)\geq 0$ for all $\vecd\in\T^H(\vecx,\Omegalog)$. By definition of hypertangent cone, and by definition of Clarke generalized derivative, it follows trivially that if a direction $\vecd$ is such that $(\cint)^\circ(\vecx,\vecd)<0$, as in~\eqref{eq:cq_notfeasible_fcq}, then $\vecd$ belongs to $\vecd\in\T^H(\vecx,\Omegalog)$. Therefore, FCQ ensures that points are not stationary for~\eqref{P_feas}.

For our asymptotic analysis, it is well known that, for direct search methods, a density assumption on the sequence of sets of poll directions ${\mathcal{D}_k}$~\cite{AuDe2006} is needed. Let us now formalize the notion of density over the unit sphere.

\begin{definition}
\label{def:dense_directions}
Let $\K$ be an infinite subset of indices.
A sequence of sets of directions $\set{\Dd_k}_{k\in \K}$ is said to be dense in the unit sphere $\B(\bvec{0}_n,1)$ if, for any $\bard\in \B(\bvec{0}_n,1)$, there exists a sequence $\set{\vecd_k}_{k\in \K^{\tt d}}$, with $\vecd_k\in\Dd_k$ and $\K^{\tt d}\subseteq\K$ such that
$\displaystyle\lim_{
\substack{k\to +\infty \\ k\in \K^{\vecd}}
}\dfrac{\vecd_k}{\norm{\vecd_k}} = \bard$.
\end{definition}
In what follows, we show that any end-path point satisfying the FCQ  (as in~\cref{cq:fcs}) is feasible for~\eqref{P0}.

\begin{theorem}
\label{th:feas}
Let \cref{ass:bounded_level_sets} hold and  $\set{\vecx_k}$ be the sequence generated by \cref{LOG-MADS-1}. Let $\set{\vecx_k}_{k\in\K_{\rho}}$ be a path-following subsequence of $\set{\vecx_k}$ and consider $\K_{\rho}^{\tt x}\subseteq\K_{\rho}$ to be the index set of an end-path subsequence with end-path point $\bar{\vecx}$. 
Assume that $f$ is Lipschitz continuous near $\bar{\vecx}$, that the sequence of sets of poll directions $\{\Dd_k\}$ is dense in the unit sphere and that $\bar{\vecx}$ satisfies the FCQ. Then, $\bar{\vecx}$ is feasible for  problem~\eqref{P0}.
\end{theorem}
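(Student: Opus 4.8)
The plan is to argue by contradiction, reducing feasibility to the single scalar condition $\cext(\barx)=0$ and then contradicting the FCQ through \cref{prop:v_clarke_nonnegative}. First I would record that every incumbent satisfies $\cint(\vecx_k)<0$ (otherwise $z(\vecx_k;\rho_k)=+\infty$), so by continuity of $\cint$ near $\barx$ (guaranteed by condition (a) of the FCQ) the end-path point obeys $\cint(\barx)\le 0$, i.e. $\barx\in\Omegalog$. It remains to show $\barx\in\Omegaext$, equivalently $\cext(\barx)=0$. Assume instead $\cext(\barx)>0$, so that $\barx\in\Omegalog\setminus\Omegaext$ and condition (b) of the FCQ supplies a direction which, after normalization to $\bard$ with $\norm{\bard}=1$ (using positive homogeneity of the Clarke generalized derivative in the direction), satisfies $(\cext)^\circ(\barx;\bard)<0$, together with $(\cint)^\circ(\barx;\bard)<0$ whenever $\cint(\barx)=0$.

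Next I would use the density hypothesis along the end-path index set: there is $\K^{\tt d}\subseteq\K_\rho^{\tt x}$ and poll directions $\vecd_k\in\Dd_k$ with $\vecd_k/\norm{\vecd_k}\to\bard$. Since $\K^{\tt d}\subseteq\K_\rho^{\tt x}$, the subsequence $\set{\vecx_k}_{k\in\K^{\tt d}}$ is still an end-path subsequence with limit $\barx$, and $\norm{\vecd_k}\le\Delta_k\to 0$ by \cref{theo:convergence_delta}. The goal is then to show $(\cext)^\circ(\barx;\bard)\ge 0$, contradicting the choice of $\bard$. I expect to split into two cases according to the value of $\cint(\barx)$, recalling that $f$ is Lipschitz near $\barx$ by hypothesis and that $\cint,\cext$ are Lipschitz near $\barx$ by condition (a).

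If $\cint(\barx)=0$, I would invoke \cref{remark:negative_clarke_path_refining} with $c=\cint$: the strict inequality $(\cint)^\circ(\barx;\bard)<0$ forces $\cint(\vecx_k+\vecd_k)\le\cint(\vecx_k)$ for all large $k\in\K^{\tt d}$, so $\bard$ is a path-refining direction and \cref{prop:v_clarke_nonnegative} immediately yields $(\cext)^\circ(\barx;\bard)\ge 0$. If instead $\cint(\barx)<0$, path-refining need not hold, so I would argue directly from inequality~\eqref{eq:delta_inequality} of \cref{prop:feasibility_limit}: multiplying it by $\rho_k$ and dividing by $\norm{\vecd_k}$, with $\vecp_k=\vecx_k+\vecd_k$, gives
\[
\rho_k\,\frac{f(\vecp_k)-f(\vecx_k)}{\norm{\vecd_k}}+\frac{\cext(\vecp_k)-\cext(\vecx_k)}{\norm{\vecd_k}}\ \ge\ \frac{\rho_k^2}{\cint(\vecp_k)}\cdot\frac{\cint(\vecp_k)-\cint(\vecx_k)}{\norm{\vecd_k}}.
\]
The first left-hand term vanishes since $f$ is Lipschitz (bounded difference quotient) and $\rho_k\to 0$ by \cref{theo:convergence_tau}; the right-hand side vanishes because $\cint(\vecp_k)\to\cint(\barx)<0$ stays bounded away from $0$, while $\rho_k^2\to 0$ and the $\cint$ difference quotient is bounded by Lipschitz continuity. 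Hence $\limsup_{k\in\K^{\tt d}}(\cext(\vecp_k)-\cext(\vecx_k))/\norm{\vecd_k}\ge 0$, and \cref{prop:dk_to_d} applied to $\cext$ (identifying $\vecy=\vecx_k$, $t=\norm{\vecd_k}$, $\vecw=\vecd_k/\norm{\vecd_k}$) gives $(\cext)^\circ(\barx;\bard)\ge 0$. In both cases the contradiction shows $\cext(\barx)=0$, so $\barx\in\Omega$ is feasible.

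The main obstacle is the case $\cint(\barx)<0$: there $\bard$ need not be path-refining, so \cref{prop:v_clarke_nonnegative} does not apply directly, and the delicate point is recognizing that the interior term $\rho_k/\cint(\vecp_k)$ in \eqref{eq:delta_inequality} is harmless precisely because $\cint(\barx)<0$ keeps its denominator bounded away from zero. This is exactly the configuration that the sign-based path-refining argument cannot reach, and it must be treated by the separate quantitative estimate above rather than by reusing \cref{prop:v_clarke_nonnegative}.
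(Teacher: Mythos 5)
Your proof is correct and follows essentially the same route as the paper's: the same split into the cases $\cint(\barx)<0$ and $\cint(\barx)=0$, the same quantitative estimate from \cref{prop:feasibility_limit} combined with \cref{theo:convergence_tau,theo:convergence_delta,prop:dk_to_d} in the first case, and the same use of \cref{remark:negative_clarke_path_refining} together with \cref{prop:v_clarke_nonnegative} in the second. The only difference is cosmetic: you run a single global contradiction from $\cext(\barx)>0$ and work with the one FCQ direction, whereas the paper first proves $(\cext)^\circ(\barx;\vecd)\geq 0$ for all directions (in the case $\cint(\barx)<0$) and then invokes the FCQ.
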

\begin{proof}
By definition of the iterates $\vecx_k$ generated by \cref{LOG-MADS-1}, it follows that  $\cint(\vecx_k) < 0$ for all $k$, and continuity implies that $\cint(\bar{\vecx}) \leq 0$ (i.e., $\vecx \in \Omegalog$). Thus, under the assumption that $\bar{\vecx}$ satisfies the FCQ,  it suffices to prove that $\bar{\vecx} \in \Omegaext$ in order to deduce that $\bar{\vecx}$ is feasible for problem~\eqref{P0}. 
The analysis is divided into two cases.

\noindent \underline{\textbf{Case (i)} $\cint(\bar{\vecx}) < 0$:} Since the sets $\{\Dd_k\}$ are dense in the unit sphere, 
then for any $\vecd \in \B(\bvec{0}_n,1)$, there exists a subsequence of indices
$\K_{\rho}^{\tt d}\subseteq \K_{\rho}^{\tt x}$ such that
$\displaystyle\lim_{\substack{k\to +\infty \\ k\in \K_{\rho}^{\tt d}}} \dfrac{\vecd_k}{\norm{\vecd_k}} = \vecd$.
In the following, we use $\vecp_k = \vecx_k+\vecd_k$. Hence, by \cref{prop:feasibility_limit}, we have, for sufficiently large $k \in \K_{\rho}^{\tt d}$,
\begin{equation*}
f(\vecp_k) - f(\vecx_k) + \tfrac{1}{\rho_k}\big( \cext(\vecp_k) - \cext(\vecx_k) \big) \geq  \tfrac{\rho_k}{\cint(\vecp_k)}\left(  \cint(\vecp_k)-\cint(\vecx_k)\right).
\end{equation*}
Hence,
\begin{equation*}
\rho_k \frac{f(\vecp_k) - f(\vecx_k)}{\|\vecd_k\|} +
\frac{\cext(\vecp_k) - \cext(\vecx_k)}{\|\vecd_k\|}  \geq  \tfrac{\rho_k^2}{\cint(\vecp_k)}\left(  \frac{\cint(\vecp_k) - \cint(\vecx_k)}{\|\vecd_k\|}\right).
\end{equation*}
Taking the superior limit for $k\in\K_{\rho}^{\tt x}$, we obtain:
{\small
\begin{equation}\label{eq:feasible_endpath_1}
\limsup_{\substack{k \to \infty \\ k \in \K_{\rho}^{\tt d}}}  
\rho_k \frac{f(\vecp_k) - f(\vecx_k)}{\|\vecd_k\|} +
\frac{\cext(\vecp_k) - \cext(\vecx_k)}{\|\vecd_k\|} 
\geq 
\limsup_{\substack{k \to \infty \\ k \in \K_{\rho}^{\tt d}}} 
\tfrac{\rho_k^2}{\cint(\vecp_k)}\left(  \frac{\cint(\vecp_k) - \cint(\vecx_k)}{\|\vecd_k\|}\right).
\end{equation}
}
Since $\bar{\vecx}$ is an end-path point, we have $\set{\vecx_k}_{k\in\K_{\rho}^{\tt d}}\to\barx$. Also, since $\|\vecd_k\| \leq \Delta_k$ and  $\set{\Delta_k}_{k\in\K_{\rho}^{\tt d}} \to 0$ (by~\cref{theo:convergence_delta}), it follows that $\set{\vecp_k}_{k\in\K_\rho^{\tt x}} \to \bar{\vecx}$.
Using $\set{\rho_k}_{k\in\K_\rho} \to 0$, $\cint(\bar{\vecx}) < 0$, and the Lipschitz continuity of $\cint(\cdot)$ near $\barx$ (as it satisfies the FCQ), we can write
$$
\lim_{k \in \K_{\rho}^{\tt d}} \frac{\rho_k^2}{\cint(\vecp_k)} = 0 \quad \text{and} \quad \left\{\frac{\cint(\vecp_k) - \cint(\vecx_k)}{\|\vecd_k\|}\right\} \text{ is bounded}.
$$
Hence, the right-hand side of~\eqref{eq:feasible_endpath_1} equals zero. \\ [1.em]
From $\set{\rho_k}_{k\in\K_\rho} \to 0$, and Lipschitz continuity of $f(\cdot)$ near $\barx$, it follows that
{\small
$$
\limsup_{\substack{k \to \infty \\ k \in \K_{\rho}^{\tt d}}} 
\left[ \rho_k \frac{f(\vecp_k) - f(\vecx_k)}{\|\vecd_k\|} +
\frac{\cext(\vecp_k) - \cext(\vecx_k)}{\|\vecd_k\|} \right]
\ =\ 
\limsup_{\substack{k \to \infty \\ k \in \K_{\rho}^{\tt d}}}
\frac{\cext(\vecp_k) - \cext(\vecx_k)}{\|\vecd_k\|} \ \geq \ 0.
$$
}
This implies that $(\cext)^\circ(\bar{\vecx};\vecd) \geq 0$ for all $\vecd \in \B(\bvec{0}_n,1)$, which leads to 
\begin{equation}\label{eq:v_nonnegative_proof_feasible}
(\cext)^\circ(\bar{\vecx};\vecd) \geq 0, \quad \forall~\vecd \in \R^{n}.
\end{equation}

Since $\bar{\vecx}$ satisfies the FCQ, if $\cext(\bar{\vecx}) > 0$, then $\bar{\vecx} \notin \Omegaext$, and there would exist a direction $\vecd \in \B(\bvec{0}_n,1)$ such that $(\cext)^\circ(\bar{\vecx};\vecd) < 0$, which contradicts~\eqref{eq:v_nonnegative_proof_feasible}. Therefore, $\cext(\bar{\vecx}) = 0$, and $\bar{\vecx}$ is feasible for problem~\eqref{P0}.

\smallskip
\noindent\underline{\textbf{Case (ii)} $\cint(\bar{\vecx}) = 0$:}
By contradiction, assume that $\bar{\vecx} \notin \Omegaext$. 
Since $\bar{\vecx}$ satisfies the FCQ, then there exists a direction $\vecd$ such that 
$(\cint)^\circ(\bar{\vecx}; \vecd) < 0$ and $(\cext)^\circ(\bar{\vecx}; \vecd) < 0$. 

Because $(\cint)^\circ(\bar{\vecx}; \vecd) < 0$, \cref{remark:negative_clarke_path_refining} 
implies that $\vecd$ is a path-refining direction. 
Hence, by \cref{prop:v_clarke_nonnegative}, we obtain 
$(\cext)^\circ(\bar{\vecx}; \vecd) \geq 0$ which contradicts 
$(\cext)^\circ(\bar{\vecx}; \vecd) < 0$. 
Therefore, $\bar{\vecx} \in \Omegaext$, and thus 
$\bar{\vecx}$ is feasible for problem~\eqref{P0}.

\end{proof}

\subsection{Stationarity}
\label{sec:stationarity}
This subsection uses the Clarke subdifferential, a generalization of the gradient for non-differentiable functions, to provide stationarity properties on the sequence of iterates generated by \cref{LOG-MADS-1}. 
First, a notion of stationarity for nonsmooth constrained problems is introduced. Then a constraint qualification is presented under which the end-path points are proved to be stationary. 

The stationarity results hold when the problem does not present equality constraints. 
This is not surprising as the additional theoretical complexity caused by the presence of nonsmooth equality constraints is well-known for mathematical programming problems, especially when barrier or penalty techniques are used to solve them~\cite{DiPillo95,Polak83}.
Stationarity results of the Penalty-Interior Point Method 
in presence of equality constraints are proposed  in~\cite{AuLedPey2015,BrLiLu2025,BrCuLiSi2024}.

The Clarke subdifferential of a function is defined as follows.
\begin{definition}[Clarke's  Subdifferential~\cite{Clarke_1990}]\label{def:subdifferential}
For any $\vecx\in\R^n$, Clarke's subdifferential of $f:\R^n\to\overline\R$ at $\vecx$ is defined as: $$\partial f(\vecx):= \set{\vecv\in\R^n\ \mid\ f^\circ(\vecx;\vecd)\geq \vecv^\top \vecd,\ \text{ for all } \vecd\in\R^n}.$$
\end{definition}

When $f(\cdot)$ is Lipschitz near a point $\vecx$, then $\partial f(\vecx)$ is nonempty and compact.

We provide a constraint qualification variant, similar to~\cite[Assumption~A3]{AuDe09a}, for our first-order stationarity  analysis. 

\begin{definition}[Stationarity Constraint Qualification]
\label{cq:scq} 

A point $\vecx\in\Omega$  is said to satisfy the stationarity constraint qualification  (SCQ) for problem~\eqref{P0} with $p=0$, if the following conditions hold:
\begin{itemize}
\item[(a)]
$\cint(\cdot)$ and $\cext(\cdot)$ are Lipschitz continuous near $\vecx$;

\item[(b)] \(\forall ~\vecd \in \T^H_\Omega(\vecx)\), $(\cint)^\circ(\vecx;\vecd) < 0$; 
\item[(c)] \(\forall ~\vecd \in \T^H_\Omega(\vecx)\), there exists \(\epsilon > 0\) such that $(\cext)^\circ(\vecy;\vecd) < 0$, for any   $\vecy \in \B(\vecx,\epsilon) \setminus \Omegaext$.
\end{itemize}
\end{definition}
\Cref{cq:scq} requires that at $\vecx$, the Clarke directional derivative of $\cint$ in any hypertangent direction is strictly negative;
and for the exterior constraints, the Clarke directional derivative of $\cext$ is strictly negative for all points outside $\Omegaext$ in a neighbourhood around $\vecx$.

As the algorithm progresses and the penalty/barrier parameters are driven to their limits, any accumulation point (end-path point) must be feasible with respect to the original constraints. The SCQ rules out pathological cases where the algorithm converges to a point on the boundary of the feasible set, without generating strictly feasible points, due to the lack of descent directions for the constraint violations
(e.g.,~\cite[Appendix A]{AuDe09a}).

The SCQ ensures that, locally, the feasible region is not ``tangentially degenerate''—that is, there are always directions that can strictly decrease any active constraint violation.
This is analogous to the classical MFCQ, but adapted to the nonsmooth, aggregated, and split-barrier/penalty structure of our problem.
The SCQ is incompatible with equality constraints, as it implicitly implies that $\T^H_\Omega(\vecx)$ is nonempty: 
the hypertangent cone is necessarily empty
when the dimension of the feasible region is strictly less than $n$.

The next result is not related directly to \cref{LOG-MADS-1}, but it is useful for our stationarity analysis.
\begin{proposition}[{\cite[Theorem 2.3.7]{Clarke_1990}}] \label{prop:Lebourg}
Let $\vecx,\vecy\in\R^n$ and let $c:\R^n\to\R$ be Lipschitz continuous in an open set containing the line segment $[\vecx,\vecy]$. Then, there exists $\bvec{u}\in (\vecx,\vecy)$ and $\vecgreek{\xi}\in \partial c(\bvec{u})$ such that
\begin{equation}\label{eq:Lebourg}
c(\vecx)-c(\vecy)\;=\;\vecgreek{\xi}^\top(\vecx-\vecy).
\end{equation}
\end{proposition}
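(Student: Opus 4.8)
The statement is Lebourg's mean value theorem, and the natural plan is to reduce the multivariate Lipschitz setting to a scalar Rolle-type argument and then translate the scalar conclusion back through the Clarke subdifferential calculus. First I would parametrize the segment by $\vecx_t = \vecx + t(\vecy - \vecx)$ for $t \in [0,1]$, so that $\vecx_0 = \vecx$ and $\vecx_1 = \vecy$, and introduce the auxiliary scalar function
\begin{equation*}
\theta(t) \ = \ c(\vecx_t) + t\,\big(c(\vecx) - c(\vecy)\big), \qquad t \in [0,1].
\end{equation*}
By construction $\theta(0) = \theta(1) = c(\vecx)$, and since $t \mapsto \vecx_t$ is affine while $c$ is Lipschitz on an open set containing the segment, the composition $t \mapsto c(\vecx_t)$ -- and hence $\theta$ -- is Lipschitz continuous on $[0,1]$.

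Next I would extract an interior critical point. By the Weierstrass theorem $\theta$ attains its maximum and minimum on $[0,1]$; because $\theta(0) = \theta(1)$, at least one of these extrema is attained at some interior point $t^* \in (0,1)$ (if $\theta$ is constant, any interior $t^*$ works). The key nonsmooth input is the first-order necessary condition for locally Lipschitz scalar functions: at a local extremum $t^*$ of $\theta$ one has $0 \in \partial \theta(t^*)$, where a local maximum is handled by applying the local-minimum statement to $-\theta$ and using $\partial(-\theta) = -\partial\theta$.

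It then remains to identify $\partial\theta(t^*)$ through the Clarke calculus. The sum rule gives $\partial\theta(t^*) \subseteq \partial\big(c(\vecx_{\cdot})\big)(t^*) + \{c(\vecx) - c(\vecy)\}$, and since the parametrization $t \mapsto \vecx_t$ is affine (hence strictly differentiable with derivative $\vecy - \vecx$), the chain rule yields the clean inclusion $\partial\big(c(\vecx_{\cdot})\big)(t^*) \subseteq \{\,\vecgreek{\xi}^\top(\vecy - \vecx) : \vecgreek{\xi} \in \partial c(\vecx_{t^*})\,\}$. Combining these with $0 \in \partial\theta(t^*)$ produces some $\vecgreek{\xi} \in \partial c(\vecx_{t^*})$ satisfying $\vecgreek{\xi}^\top(\vecy - \vecx) + c(\vecx) - c(\vecy) = 0$; setting $\bvec{u} = \vecx_{t^*} \in (\vecx, \vecy)$ and rearranging gives exactly $c(\vecx) - c(\vecy) = \vecgreek{\xi}^\top(\vecx - \vecy)$, which is~\eqref{eq:Lebourg}.

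The two steps carrying the real content are the necessary condition $0 \in \partial\theta(t^*)$ at an interior extremum and the subdifferential chain rule; the former is the nonsmooth analogue of Fermat's rule for Lipschitz functions, and the latter is where strict differentiability of the affine parametrization is used to avoid the closed-convex-hull correction that would appear in the general chain rule (here the image of the convex compact set $\partial c(\vecx_{t^*})$ under the linear functional $\vecgreek{\xi} \mapsto \vecgreek{\xi}^\top(\vecy - \vecx)$ is automatically a compact interval). Everything else -- the endpoint matching $\theta(0) = \theta(1)$, Lipschitzness of $\theta$, and the sum rule for the linear term -- is routine, so I expect the main obstacle to lie purely in invoking these two pieces of Clarke subdifferential calculus correctly.
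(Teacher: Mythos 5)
Your proof is correct and is precisely the classical Lebourg argument given in Clarke's monograph (Theorem 2.3.7), which is the source the paper cites; the paper itself states this proposition without proof, so there is nothing internal to compare against. Both of your key steps --- the Fermat-type rule $0 \in \partial\theta(t^*)$ at an interior extremum of the auxiliary function and the exact (hull-free) chain rule for the affine parametrization --- are invoked correctly, so the argument is complete.
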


The following proposition is similar to a result with the progressive barrier~\cite{AuDe2009} 
technique used to handle inequality constraints.

\begin{proposition}
\label{lemma:nonincreas_constraints}
Let $\vecx\in\Omega$ be a point that satisfies the SCQ and let $\vecd\in \T^H_\Omega(\vecx)$. Then, there exists $\epsilon>0$ such that, for any $\vecy\in \B(\vecx,\epsilon)$ and $\vecw\in \B(\vecd,\epsilon)$, the following holds for all $t\in(0,\epsilon)$
\begin{subequations}
\begin{align}
\label{eq:decrease_cq_int}
\cint(\vecy+t\vecw) \leq \cint(\vecy),\\
\label{eq:decrease_cq_ext}
\cext(\vecy+t\vecw) \leq \cext(\vecy).
\end{align}
\end{subequations}
\end{proposition}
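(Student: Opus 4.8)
The plan is to treat the two inequalities separately, since they rely on different parts of the SCQ (\cref{cq:scq}). The inequality~\eqref{eq:decrease_cq_int} follows almost immediately from condition~(b) together with the refined characterization of the Clarke derivative in \cref{prop:dk_to_d}. Since $\cint$ is Lipschitz near $\vecx$ by~(a), \cref{prop:dk_to_d} gives $(\cint)^\circ(\vecx;\vecd)=\limsup_{\vecy\to\vecx,\,t\searrow 0,\,\vecw\to\vecd}\big(\cint(\vecy+t\vecw)-\cint(\vecy)\big)/t$, and condition~(b) makes this limit superior strictly negative. Hence there is $\epsilon_1>0$ such that the difference quotient is negative for every $(\vecy,t,\vecw)$ with $\vecy\in\B(\vecx,\epsilon_1)$, $\vecw\in\B(\vecd,\epsilon_1)$, $t\in(0,\epsilon_1)$; multiplying by $t>0$ yields $\cint(\vecy+t\vecw)\le\cint(\vecy)$. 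No use of the hypertangent structure is needed here.

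For~\eqref{eq:decrease_cq_ext} I would argue by contradiction along the segment, using Lebourg's mean value theorem (\cref{prop:Lebourg}), which applies because $\cext$ is Lipschitz near $\vecx$ by~(a). Fix admissible $\vecy,\vecw,t$. If $\cext(\vecy+t\vecw)=0$ the inequality is trivial, so suppose $\cext(\vecy+t\vecw)>\cext(\vecy)=:a\ge 0$, whence $\cext(\vecy+t\vecw)>0$. Set $s^{\ast}=\sup\{s\in[0,t]:\cext(\vecy+s\vecw)\le a\}$; by continuity $s^{\ast}<t$, $\cext(\vecy+s^{\ast}\vecw)=a$, and $\cext(\vecy+s\vecw)>a\ge 0$ for all $s\in(s^{\ast},t]$, so $\vecy+s\vecw\notin\Omegaext$ there. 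Applying \cref{prop:Lebourg} on the segment from $\vecy+s^{\ast}\vecw$ to $\vecy+t\vecw$ produces an intermediate point $\bvec{u}\notin\Omegaext$ and $\vecgreek{\xi}\in\partial\cext(\bvec{u})$ with $\cext(\vecy+t\vecw)-a=(t-s^{\ast})\,\vecgreek{\xi}^{\top}\vecw>0$, so $\vecgreek{\xi}^{\top}\vecw>0$. On the other hand $\vecgreek{\xi}^{\top}\vecw\le(\cext)^\circ(\bvec{u};\vecw)$ by \cref{def:subdifferential}, so a contradiction is reached as soon as I know $(\cext)^\circ(\bvec{u};\vecw)<0$.

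Thus the entire $\cext$ argument reduces to one uniform estimate: there exists $\epsilon>0$ such that $(\cext)^\circ(\vecz;\vecw)<0$ for every $\vecz\in\B(\vecx,\epsilon)\setminus\Omegaext$ and every $\vecw\in\overline{\B}(\vecd,\epsilon)$. I would secure this in two stages. First, since the hypertangent cone $\T^H_\Omega(\vecx)$ is open, shrink $\epsilon$ so that $\overline{\B}(\vecd,\epsilon)\subseteq\T^H_\Omega(\vecx)$; then condition~(c) of the SCQ applies to each individual direction $\vecw$ in this ball and furnishes, for that $\vecw$, a radius on which $(\cext)^\circ(\cdot;\vecw)$ is negative off $\Omegaext$. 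Second, I would upgrade this family of pointwise-in-$\vecw$ estimates to a single common $\epsilon$ by invoking the joint upper semicontinuity of $(\vecz,\vecw)\mapsto(\cext)^\circ(\vecz;\vecw)$ and the compactness of $\overline{\B}(\vecd,\epsilon)$. As a consistency check, the purely feasible case is immediate from \cref{def:hypertangent_cone}: if $\vecy\in\Omega$ then $\vecy+t\vecw\in\Omega\subseteq\Omegaext$, forcing $\cext(\vecy+t\vecw)=0=\cext(\vecy)$; this is reassuring but, once the uniform estimate is available, is not logically required.

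The main obstacle is exactly this last upgrade. The generalized derivative $(\cext)^\circ(\vecz;\vecd)$ necessarily degenerates to a \emph{nonnegative} value as $\vecz\to\partial\Omegaext$, and in particular at $\vecx$ itself, where $\cext$ attains its global minimum $0$, so $0\in\partial\cext(\vecx)$ and $(\cext)^\circ(\vecx;\vecd)\ge 0$. Consequently a naive upper-semicontinuity/compactness argument only delivers $(\cext)^\circ(\vecx;\vecd)\ge 0$, not uniform strict negativity, and the crude perturbation bound $|(\cext)^\circ(\vecz;\vecw)-(\cext)^\circ(\vecz;\vecd)|\le L\|\vecw-\vecd\|$ is too lossy near the boundary, since the SCQ gap $|(\cext)^\circ(\vecz;\vecd)|$ itself tends to $0$ there. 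The role of SCQ~(c) is precisely to \emph{postulate} strict negativity at every infeasible $\vecz$ arbitrarily close to $\vecx$, thereby excluding this tangential degeneracy; what remains is to propagate it from the fixed $\vecd$ to a full cone of directions. I expect to close the gap by controlling, for infeasible $\vecz$ near $\vecx$, the \emph{directions} of the subgradients $\vecgreek{\xi}\in\partial\cext(\vecz)$ (which point outward from $\Omegaext$) rather than their vanishing magnitudes, so that the obtuse-angle inequality $\vecgreek{\xi}^{\top}\vecd<0$ guaranteed by SCQ~(c) persists uniformly for all $\vecw$ in a fixed neighborhood of $\vecd$.
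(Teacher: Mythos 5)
Your route for \eqref{eq:decrease_cq_int} (condition (b) of \cref{cq:scq} plus the sequential characterization in \cref{prop:dk_to_d}) is exactly the paper's, and your route for \eqref{eq:decrease_cq_ext} (contradiction, Lebourg's theorem \cref{prop:Lebourg}, condition (c) of \cref{cq:scq}) is also the paper's. In one respect you are tighter than the paper: the paper asserts $(\cext)^\circ(\bvec{u};\vecw_\epsilon)<0$ for \emph{every} point $\bvec{u}$ of the segment, even though condition (c) only concerns points outside $\Omegaext$, and it treats the case $\cext(\vecy_\epsilon)=0$ separately by invoking hypertangency (which strictly speaking requires $\vecy_\epsilon\in\Omega$, not merely $\vecy_\epsilon\in\Omegaext$). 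Your exit-time point $s^{\ast}$ forces the Lebourg point to satisfy $\cext>\cext(\vecy)\ge 0$, hence to lie outside $\Omegaext$, and absorbs both cases at once.

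However, as submitted your proof has a genuine gap, which you yourself flag: everything is reduced to the uniform estimate ``$(\cext)^\circ(\vecz;\vecw)<0$ for all infeasible $\vecz$ near $\vecx$ and all $\vecw$ in a fixed ball around $\vecd$,'' and that estimate is never proved. Your diagnosis of why the obvious attempts fail is correct (upper semicontinuity only yields $(\cext)^\circ(\vecx;\vecd)\ge 0$ at the feasible limit, and $|(\cext)^\circ(\vecz;\vecw)-(\cext)^\circ(\vecz;\vecd)|\le L\|\vecw-\vecd\|$ is useless when the gap in (c) vanishes as $\vecz\to\vecx$); for what it is worth, the paper glosses over the same point, since it applies (c) in the direction $\vecw_\epsilon$ whose associated radius depends on $\vecw_\epsilon$, hence on $\epsilon$ --- a circular dependence. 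The good news is that your closing idea (control subgradient \emph{directions}, not magnitudes) does close the gap in a few lines. Suppose the uniform estimate fails: take infeasible $\vecz_j\to\vecx$, $\vecw_j\to\vecd$, and $\vecgreek{\xi}_j\in\partial\cext(\vecz_j)$ with $\vecgreek{\xi}_j^{\top}\vecw_j\ge 0$. For $j$ large each $\vecgreek{\xi}_j\neq 0$ (otherwise $(\cext)^\circ(\vecz_j;\vecd)\ge 0$, contradicting (c)), and the $\vecgreek{\xi}_j$ are bounded by the local Lipschitz constant, so the unit vectors $\vecgreek{\eta}_j=\vecgreek{\xi}_j/\|\vecgreek{\xi}_j\|$ have a cluster point $\bar{\vecgreek{\eta}}$ with $\|\bar{\vecgreek{\eta}}\|=1$. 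Condition (c) in the direction $\vecd$ gives $\vecgreek{\eta}_j^{\top}\vecd<0$ for large $j$, hence $\bar{\vecgreek{\eta}}^{\top}\vecd\le 0$, while $\vecgreek{\eta}_j^{\top}\vecw_j\ge 0$ gives $\bar{\vecgreek{\eta}}^{\top}\vecd\ge 0$; thus $\bar{\vecgreek{\eta}}^{\top}\vecd=0$. Since the hypertangent cone is open, $\vecd+\delta\bar{\vecgreek{\eta}}\in\T^H_\Omega(\vecx)$ for small $\delta>0$, and applying (c) in that direction yields $\bar{\vecgreek{\eta}}^{\top}(\vecd+\delta\bar{\vecgreek{\eta}})\le 0$, i.e.\ $\delta\|\bar{\vecgreek{\eta}}\|^2\le 0$, a contradiction. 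With this lemma inserted your argument is complete (and arguably more rigorous than the paper's); without it, the central uniformity step remains unproven.
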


\begin{proof}
Throughout, $\cint$ and $\cext$ are locally Lipschitz near $\vecx$ by condition (a) of the SCQ, hence Clarke’s generalized directional derivatives
$(\cint)^\circ(\cdot;\cdot)$ and $(\cext)^\circ(\cdot;\cdot)$ are well-defined.
The proof of~\eqref{eq:decrease_cq_int} follows using the Lipschitz continuity of $\cint$ near $\vecx$ and condition (b) of SCQ, i.e., $(\cint)^\circ(\vecx,\vecd)<0$ for all $\vecd\in\T^H_\Omega(\vecx)$.
Regarding~\eqref{eq:decrease_cq_ext}, let us suppose, by contradiction, that for any $\epsilon>0$, there exist $\vecy_\epsilon\in \B(\vecx,\epsilon)$, $\vecw_\epsilon \in \B(\vecd,\epsilon)$ and $t_\epsilon \in (0,\epsilon)$ such that
\begin{equation} \label{eq1:lem32}
\cext(\vecy_\epsilon+t_\epsilon \vecw_\epsilon) > \cext(\vecy_\epsilon).
\end{equation}
\noindent If $\cext(\vecy_\epsilon)=0$, since $\vecd\in\T^H_\Omega(\vecx)$, by the definition of $\T^H_\Omega(\vecx)$, for small enough $\epsilon$ it holds $\vecy_\epsilon+t_\epsilon\vecw_\epsilon\in\Omega$, which implies $\cext(\vecy_\epsilon+t_\epsilon\vecw_\epsilon)=0$, and hence contradicts \eqref{eq1:lem32}. 

\noindent If $\cext(\vecy_\epsilon)>0$, since $\T^H_\Omega(\vecx)$ is an open convex set, for small enough $\epsilon$, the vector $\vecw_\epsilon\in\T^H_\Omega(\vecx)$, and the line segment $ [\vecy_\epsilon,\vecy_\epsilon+t_\epsilon\vecw_\epsilon]\subset\B(\vecx,\epsilon)$. Therefore, by condition (c) of SCQ, $(\cext)^\circ(\bvec{u};\vecw_\epsilon)<0$ for all $\bvec{u} \in [\vecy_\epsilon,\vecy_\epsilon+t_\epsilon\vecw_\epsilon]$.
Using \cref{prop:Lebourg} and \eqref{eq1:lem32}, there exists $\bvec{u}_\epsilon \in[\vecy_\epsilon,\vecy_\epsilon+t_\epsilon\vecw_\epsilon]$ and $\vecgreek{\xi}_{\epsilon}\in\partial \cext(\bvec{u}_\epsilon )$ such that $$ t_\epsilon\vecw_\epsilon^\top\vecgreek{\xi_{\epsilon}} =\cext(\vecy_\epsilon+t_\epsilon\vecw_\epsilon)-\cext(\vecy_\epsilon)>0.$$ 
Hence, by \cref{def:subdifferential} of $\partial \cext(\bvec{u}_{\epsilon})$, it follows that $(\cext)^\circ(\bvec{u};\vecw_\epsilon)\geq \vecw_\epsilon^\top\vecgreek{\xi}_{\epsilon} > 0$, again, a contradiction. Hence,~\eqref{eq:decrease_cq_ext} holds.
\end{proof}

Next is our final result.

\begin{theorem}
Consider problem~\eqref{P0} with $p=0$. Let \cref{ass:bounded_level_sets} hold and  $\set{\vecx_k}$ be the sequence of iterates generated by \cref{LOG-MADS-1}. Let $\set{\vecx_k}_{\K_{\rho}}$ be a path-following subsequence of $\set{\vecx_k}$ and consider $\K_{\rho}^{\tt x} \subseteq \K_{\rho}$ to be the index set of a relative end-path subsequence, with end-path point $\bar{\vecx}$. Assume that $f$ is Lipschitz continuous around $\bar{\vecx}$, that the sequence of sets of poll directions $\{\Dd_k\}$ is dense in the unit sphere, and that $\bar{\vecx}$ satisfies the SCQ.  Then, $f^\circ(\barx;\vecd) \geq 0$ for all $\vecd\in\T^H_\Omega(\barx)$.
\end{theorem}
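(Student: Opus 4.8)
The plan is to fix an arbitrary direction $\vecd \in \T^H_\Omega(\barx)$ and prove $f^\circ(\barx;\vecd) \geq 0$. Since the Clarke generalized derivative is positively homogeneous in its direction argument and the hypertangent cone is a cone, I would first reduce to the case where $\vecd$ is a unit vector (the case $\vecd=\bvec{0}_n$ being trivial). Note that membership $\barx\in\Omega$ is already built into the SCQ (\cref{cq:scq}), so $\barx$ is feasible and no separate appeal to \cref{th:feas} is needed. Because the sequence of poll-direction sets $\{\Dd_k\}$ is dense in the unit sphere (\cref{def:dense_directions}), there exist an index set $\K_\rho^{\tt d}\subseteq\K_\rho^{\tt x}$ and poll directions $\vecd_k\in\Dd_k$ with $\vecd_k/\norm{\vecd_k}\to\vecd$. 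Throughout I write $\vecp_k=\vecx_k+\vecd_k$.

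The first substantive step is to transfer the constraint qualification into monotonicity of the two violation functions along this subsequence. Since $\vecd\in\T^H_\Omega(\barx)$ and $\barx$ satisfies the SCQ, \cref{lemma:nonincreas_constraints} supplies an $\epsilon>0$ controlling such behaviour. For $k\in\K_\rho^{\tt d}$ large enough I would verify the three neighbourhood conditions of that proposition with $\vecy=\vecx_k$, $\vecw=\vecd_k/\norm{\vecd_k}$, and $t=\norm{\vecd_k}$: indeed $\vecx_k\to\barx$, $\vecd_k/\norm{\vecd_k}\to\vecd$, and $\norm{\vecd_k}\leq\Delta_k\to 0$ by \cref{theo:convergence_delta}. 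This yields $\cint(\vecp_k)\leq\cint(\vecx_k)$ and $\cext(\vecp_k)\leq\cext(\vecx_k)$ for all sufficiently large $k\in\K_\rho^{\tt d}$.

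The heart of the argument is then a sign analysis of the merit-gap inequality~\eqref{eq:delta_inequality} from \cref{prop:feasibility_limit}(ii). Part (i) of that proposition guarantees $\cint(\vecp_k)<0$; together with $\rho_k>0$ and $\cint(\vecp_k)-\cint(\vecx_k)\leq 0$, the right-hand term $\tfrac{\rho_k}{\cint(\vecp_k)}\big(\cint(\vecp_k)-\cint(\vecx_k)\big)$ is nonnegative, while the penalty term $\tfrac{1}{\rho_k}\big(\cext(\vecp_k)-\cext(\vecx_k)\big)$ on the left is nonpositive. Rearranging~\eqref{eq:delta_inequality} thus forces $f(\vecp_k)-f(\vecx_k)\geq 0$ for all large $k\in\K_\rho^{\tt d}$. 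I would then divide by $\norm{\vecd_k}>0$ and pass to the superior limit over $k\in\K_\rho^{\tt d}$: each quotient is nonnegative, and \cref{prop:dk_to_d} (instantiated with $\vecy_k=\vecx_k$, $t_k=\norm{\vecd_k}$, $\vecw_k=\vecd_k/\norm{\vecd_k}\to\vecd$) bounds $f^\circ(\barx;\vecd)$ below by this superior limit, giving $f^\circ(\barx;\vecd)\geq 0$. Undoing the normalization then yields the claim for every $\vecd\in\T^H_\Omega(\barx)$.

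The main obstacle is the careful bookkeeping of signs in the merit-gap inequality: the conclusion hinges on both the barrier contribution and the penalty contribution carrying the correct sign once the SCQ-monotonicity is available, and on the strict interiority $\cint(\vecp_k)<0$ being in force. A pleasant feature, compared with the feasibility proof of \cref{th:feas}, is that invoking \cref{lemma:nonincreas_constraints} lets me treat the cases $\cint(\barx)<0$ and $\cint(\barx)=0$ uniformly, so no case split is required; the only genuine verification is that the neighbourhood hypotheses of \cref{lemma:nonincreas_constraints} hold eventually along $\K_\rho^{\tt d}$, which follows from $\Delta_k\to 0$ and the convergence of the normalized poll directions.
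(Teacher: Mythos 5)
Your proposal is correct and follows essentially the same route as the paper's own proof: density of the poll directions yields the subsequence, \cref{lemma:nonincreas_constraints} (via the SCQ) gives the monotonicity $\cint(\vecp_k)\leq\cint(\vecx_k)$ and $\cext(\vecp_k)\leq\cext(\vecx_k)$, and the sign analysis of the merit-gap inequality~\eqref{eq:delta_inequality} from \cref{prop:feasibility_limit} forces $f(\vecp_k)-f(\vecx_k)\geq 0$, whence $f^\circ(\barx;\vecd)\geq 0$. Your explicit reduction to unit directions and explicit appeal to \cref{prop:dk_to_d} are only cosmetic differences from the paper's argument.
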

\begin{proof}
Since the sets $\{\Dd_k\}$ are dense in the unit sphere, for any $\vecd \in \B(\bvec{0}_n,1)$, there exists a subsequence of $\K_{\rho}^{\tt d} \subseteq K_{\rho}^{\tt x}$ such that
$\displaystyle\lim_{\substack{k\to +\infty \\ k\in \K^{\vecd}}}\tfrac{\vecd_k}{\norm{\vecd_k}} = \vecd$. In the following, we use $\vecp_k = \vecx_k+\vecd_k$. 
\cref{prop:feasibility_limit} ensures that for sufficiently large $k \in \K_{\rho}^{\tt d}$,
\begin{equation}\label{eq:1:thm:SCQ}
\dfrac{f(\vecp_k) - f(\vecx_k)}{\|\vecd_k\|} \ \geq \ \dfrac{\rho_k}{\cint(\vecp_k)}\left(  \dfrac{\cint(\vecp_k)-\cint(\vecx_k)}{\|\vecd_k\|}\right) - \frac{1}{\rho_k}\left( \dfrac{\cext(\vecp_k) - \cext(\vecx_k)}{\|\vecd_k\|} \right).
\end{equation}
Since $\{\vecx_k\}_{k\in \K_\rho^{\tt d}}\to\barx$, and, for all $\vecd\in\T^H_\Omega(\barx)$, there exists a subsequence of $\overline{\K}_\rho^{\tt d} \subseteq \K_\rho^{\tt d}$ such that $\left\{\tfrac{\vecd_k}{\|\vecd_k\|}\right\}_{k\in \overline{\K}_\rho^{\tt d} }\to\vecd$. Thus, as $\barx$ satisfies SCQ,  \cref{lemma:nonincreas_constraints} ensures that~\eqref{eq:decrease_cq_ext} and~\eqref{eq:decrease_cq_int} hold by setting $\vecw=\vecd_k/\|\vecd_k\|$, $t=\|\vecd_k\|$ and $\vecy=\vecx_k$ for $k\in\overline{\K}_{\rho}^{\tt d}$ sufficiently large. Hence, for all $\vecd\in\T^H_\Omega(\barx)$ and sufficiently large $k \in \overline{\K}_\rho^{\tt d}$,
\begin{equation*}
-(\cext(\vecp_k)-\cext(\vecx_k))\geq 0 \quad \text{and} \quad
\dfrac{\cint(\vecp_k)-\cint(\vecx_k)}{\cint(\vecp_k)}\geq 0,
\end{equation*} 
which implies that \eqref{eq:1:thm:SCQ} leads to $\frac{f(\vecp_k)-f(\vecx_k)}{\|\vecd_k\|} \geq 0$ for all $\vecd\in\T^H_\Omega(\barx)$ and $k \in \overline{\K}_\rho^{\tt d}$ sufficiently large. Hence, we deduce that $f^\circ(\barx;\vecd) \geq 0$ for all $\vecd\in\T^H_\Omega(\barx)$.
\end{proof}

\section{Practical implementation}
\label{sec-Impl}

This section details some practical implementation aspects.

\subsection{Coding algorithm}
\label{subsection::coding}
All MADS variants in this work are implemented in {\sf NOMAD~4}~\cite{nomad4paper} with the default granular mesh called GMesh. In this variant of the mesh, the frame size $\Delta_k$ is updated using \textbf{increase} or \textbf{decrease} functions instead of being multiplied by $\theta_\Delta$ or $\theta_\Delta^{-1}$. The convergence properties of MADS with GMesh, the updating functions and the initialization of $\Delta_0$ are detailed in~\cite{AuLeDTr2018}.

The MADS Penalty-Interior Point (MADS-PIP) algorithm is coded in a development version forked from release~4.5.2\footnote{\url{https://github.com/bbopt/nomad}}. In the code, the MADS-PIP algorithm class derives from the MADS progressive barrier (MADS-PB) algorithm class in which the objective function is changed, the infeasibility measure is ignored and the condition of {\bf \blue Step~3} is tested at each unsuccessful iteration. 
Coding this new algorithm relies on the {\sf NOMAD~4} capability to dynamically change at each iteration the computation of the objective function and infeasibility measure from all blackbox problem outputs. Once the partition sets $\Glog$ and $\Gext$ have been identified, the objective function values can be computed as the merit function $z(\vecx_k,\rho_k)$ of \cref{merit-z} and the infeasibility measure can be fixed to zero. The penalty parameter $\rho$ and the partition sets can be updated periodically as detailed in the next section.

\subsection{Switches and numerical choices}
\label{subsection::switch}

As suggested in \cref{sec-algo}, the PIP strategy can benefit from dynamical updates of the constraints index partition sets $\Glog$ and $\Gext$. We propose a slight modification of {\bf \blue Step~4} of \cref{LOG-MADS-1}, as follows.

\begin{algorithm}
\begin{algorithmic}[0]
\LComment{{\bf Step~4}: Stopping criterion check}
\If{some stopping criterion is satisfied} \State{\textbf{Stop}} \Else 
\State Set 
$\Glog \gets \Glog\cup\{\ell\}$ and  $\Gext \gets \Gext\setminus\{\ell\}$
for each $\ell\in\Gext$ for which $g_\ell(\vecx_{k+1})<0$
\State $k \gets k+1 $ and  go to {\bf \blue Step~0}
\EndIf
\end{algorithmic}
\end{algorithm}

The update in {\bf \blue Step~4} consists of verifying whether the inequality constraints contributing to the exterior violation function are strictly feasible at the new incumbent solution. That is, at each iteration $k$, the algorithm checks the condition $g_\ell(\vecx_{k+1})<0$ for each $\ell\in\Gext$. If the latter holds for an index $\bar{\ell}$, then $\bar{\ell}$ is removed from $\Gext$ and assigned to $\Glog$, hence, the inequality constraint is moved into the interior violation function. Since the number of inequality constraints is finite, the update in {\bf \blue Step~4} can be performed only a finite number of times. When the last of such updates occurs, the algorithm follows the theoretical analysis of the previous section. Finally, note that at unsuccessful iterations it holds $\vecx_{k+1}=\vecx_{k}$; therefore, the constraint index partition sets can only change at successful iterations. It follows that the penalty parameter and the constraint index partition sets cannot be updated during the same iteration.

In the computational experiments, the rule in the new step of the algorithm has been implemented considering a tolerance, that is
$$ g_\ell(\vecx_{k+1}) \leq -\varepsilon^{\tt ext},$$
where $\varepsilon^{\tt ext}$ was set equal to $10^{-14}$. The tolerance allows for a choice that is more robust to cases where the points may be extremely close to the boundary of the feasible region and machine precision might cause inconsistencies.

The implementation of the PIP strategy comes with non-trivial numerical choices. First, we consider the numerical issue of having constraint functions that are not well-scaled with the objective function. To address this problem, the logarithmic and penalty terms of the merit function (see~\cref{merit-z}) are multiplied by scaling constants $b^{\tt int}$ and $b^{\tt ext}$; i.e., the merit function can be written as
$$ z(\vecx;\rho) = f(\vecx) - b^{\tt int} \rho \log(-\cint(\vecx)) + \tfrac{b^{\tt ext}}{\rho} \cext(\vecx). $$
Note that such constants do not affect the theoretical properties of the algorithm.
After some preliminary tests, the choice of $b^{\tt int}$ did not seem to affect the performance of the algorithm significantly; hence, it is set equal to $1$. The remaining constant is set using the following rule,
$$b^{\tt ext} = \begin{cases}
1 & \text{if } f(\vecx_0)= 0, \\
\max\{1,10^{\lfloor\log_{10}(|f(\vecx_0|)\rfloor}\} & \text{otherwise}.

\end{cases}$$

The idea behind the latter formula is to increase the effect of the penalty term to the same order of magnitude as the value of the objective function at the initial point. This allows the objective function and the exterior constraint function to have comparable contributions to the merit function value. Note that  when the order of magnitude of $f(\vecx_0)$ is smaller than $1$, the constant $b^{\tt ext}$ is set to $1$. Setting $b^{\tt ext}$ as a power of $10$ is due to faster and more robust numerical computation.
Finally, the penalty-barrier parameter $\rho$ is initialized as $\rho_0 = 10^{-1}$.

When an iteration $k$ is unsuccessful, the algorithm performs the penalty parameter update step.
For the practical implementation, the updating criterion is relaxed by multiplying both terms by positive constants $b_{\rho}$ and $b_c$. The penalty-barrier parameter $\rho_k$ is reduced whenever
$$ \Delta_{k+1} \leq \min\{ b_{\rho}\rho_k^{\beta}, b_c[\phi^{\tt prox}(\vecx_k)]^2\},$$
where $\beta = 1+10^{-9}$, $b_\rho = 10$, and $b_c = 10^{10}$. For values of $\phi^{\tt prox}(\vecx_k)$ on the order of $10^{-5}$, the quantity $b_c[\phi^{\tt prox}(\vecx_k)]^2$ 
is on the order of $1$. This means that the influence of the second term of the criterion is not masked by the scaling constant, it is solely relaxed. Note that this choice is consistent with the theoretical analysis. Finally, the penalty contraction parameter is set as $\theta_\rho = 10^{-2}$.

We conclude this section by describing a strategy to update the current incumbent at each iteration when the unconstrained subproblem is updated. Whenever the penalty-barrier parameter is updated or the constraint index partition sets are changed, the merit function is modified, and the current incumbent point might not be the best candidate among the previously visited ones. Therefore, the algorithm computes analytically the merit function value for all the points stored during the optimization routine and selects the one with the lowest merit function value.

\section{Computational experiments}
\label{sec-tests}
This section proposes a series of computational experiments to compare
MADS with Penalty-Interior Point (MADS-PIP) to
MADS using the progressive barrier (MADS-PB).
For one of the real-life blackbox optimization problems,
the comparison considers CMA-ES without parameter tuning from the python module PYCMA\footnote{\url{https://github.com/CMA-ES/pycma}}.

We benchmark algorithms for solving constrained problems by examining their data profiles within a selected blackbox evaluation budget. 
In this work, we compute the data profile function $d_a(k)$ of~\cite[Definition~3.3]{G-2025-36} to measure the portion of problem instances that an algorithm $\tau$-solves with a precision $\tau$ within $k$ groups of $n + 1$ evaluations. The best known objective function value $f^*$ considered here is the lowest feasible value achieved by any algorithm on a problem instance. A problem instance combines a test problem with an initial point and a fixed random seed.

Since MADS-PB and MADS-PIP handle constraints in fundamentally different ways, we also examine their efficiency in obtaining a first feasible point starting from an infeasible one. In what follows, when deemed relevant, data profiles show the portion of problem instances for which a feasible point was found within $k$ groups of $n + 1$ evaluations. 

For problems with equality constraints, a strict feasibility cannot be obtained during optimization. 
During the benchmarking process, a relaxed feasibility criterion is considered.
After some preliminary testing, a point is considered feasible if $|h_j(\textbf{x})| < 10^{-8}$ for each constraint index $j$.

\subsection{Settings}
In this work, all MADS variants used the Ortho~2N polling directions~\cite{AbAuDeLe09}. Several search methods are enabled by default for the search step. In this work, the default Nelder-Mead search method~\cite{AuTr2018} is disabled to focus solely on MADS. For the comparisons, we have considered the effect of enabling the quadratic model search method~\cite{CoLed2011} or not, noted MADS-PIP QMS and MADS-PB QMS in what follows. Quadratic models are built for each output individually before any combination into a non-quadratic surrogate of the merit function $z$ (MADS-PIP) or the infeasibility measure (MADS-PB). The search step solves a surrogate subproblem to find a new candidate point within a box whose side lengths are proportional to the frame size (typically four times larger) and centred on the incumbent point $\vecx_k$. The surrogate subproblem is solved with the same algorithm as the main optimization problem~\ref{P0}. But, for MADS-PIP QMS, during the subproblem iterations the penalty parameters $\rho_k$ are fixed to values taken from the main problem iteration. 

The code of MADS progressive barrier was also adapted to handle equality constraints. The infeasibility of each equality constraint $|h_j(\textbf{x})|$ is added into the overall infeasibility measure when above a selected feasibility threshold. In other words, we have considered that if $|h_j(\textbf{x})| < \epsilon$ the constraint can be relaxed and not included in the infeasibility measure. If not, the quantity $|h_j(\textbf{x})|^2$ is added into the infeasibility measure.
Relaxing feasibility must be done with care because it alters a problem's solution. Some testing was done with several small $\epsilon$s. To be consistent with the choice made previously for benchmarking, $\epsilon=10^{-8}$ was chosen as a good compromise for all the tests on problems with equality constraints.

\subsection{Inequality constrained optimization}
\label{sec::Inequalitypbs}

This section describes the computational experiments conducted on three inequality-constrained test sets.

\subsubsection*{Results on problems from the literature}
\cref{tab-pbs} lists fourteen inequality constrained problems selected from the literature. 
The number of variables $n$ varies from $2$ to $13$. 
The number of  general inequality constraints $m$ ranges from $1$ to $15$ and there are no equality constraint.
Feasible and infeasible starting points are considered. 
A problem instance is defined by a single initial point and a random seed. Each problem with a selected initial point is run $10$ times with different seeds for a total of $780$ runs for each algorithm tested.

\begin{table}[htb!]
\small
\centering
\scriptsize
\renewcommand{\arraystretch}{1.}
\begin{tabular}{|l|c|c|c||l|c|c|c|}
\hline
Problem & $n$ & $m$ & \# $x_0$ &
Problem & $n$ & $m$ & \# $x_0$ \\
\hline\hline

CHENWANG\_F2\cite{ChWa2010}   & 8  & 6 & 11 &
MEZMONTES\cite{MezCoe05}   & 2  & 2 & 1  \\

CHENWANG\_F3\cite{ChWa2010}   & 10 & 8 & 10 &
OPTENG\_RBF\cite{KiArYa2011} & 3  & 4 & 1  \\

CRESCENT\cite{AuDe09a}    & 10 & 2  & 1  &
PENTAGON\cite{LuVl00}     & 6  & 15  & 1  \\

DISK\cite{AuDe09a}    & 10 & 1  & 1  &
SNAKE\cite{AuDe09a}    & 2  & 2  & 1  \\

G210\cite{AuDeLe07}   & 10 & 2 & 1  &
SPRING\cite{RodRenWat98}& 3  & 4 & 10 \\

MAD6\cite{LuVl00}     & 5  & 7  & 10 &
TAOWANG\_F2\cite{TaoWan08}   & 7  & 4 & 10 \\

MDO$^*$ & 10 & 10 & 10 &
ZHAOWANG\_F5\cite{ZhaWan2010b}& 13 & 9  & 10 \\
\hline
\end{tabular}
\begin{minipage}{0.8\textwidth}
$^*${\scriptsize MDO problem with the same analytical expressions for disciplines than the MDO problem in \cref{sec:mdo_eq} but with a different approach to solve the multidisciplinary analyses.
Source code available at \url{https://github.com/bbopt/aircraft_range}.}
\end{minipage}
\caption{Description of a set of analytical problems from the literature without any equality constraints.}
\label{tab-pbs}
\end{table}

\cref{fig:78_pbs} shows the obtained results on the selection of problems from the literature. 
The results show that with or without the quadratic model, 
MADS-PIP performs slightly better than MADS-PB.
On this test set, the impact of the models is much more important than the choice in the strategy (PIP versus PB) to handle the constraints.

\begin{figure}[!ht]
\centering
\includegraphics[trim={0cm 0cm 0cm 0cm 0cm},clip, width=\linewidth]{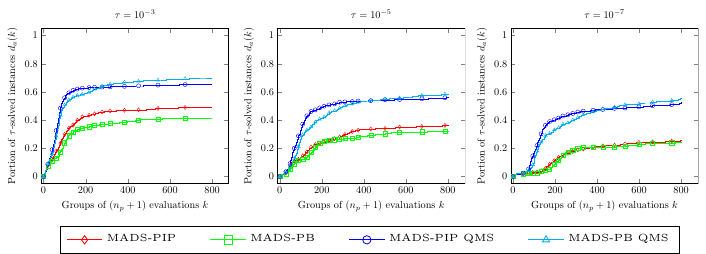}
\caption{Data profiles on a selection of analytical problems from the literature.}
\label{fig:78_pbs}
\end{figure}

\subsubsection*{Results on a real blackbox: The \solar{6} problem}

\solar{} is a blackbox optimization benchmarking framework~\cite{solar_paper}.
The \solar{6} instance is considered here for benchmarking. 
It is a simulator of a thermal power plant using the molten salt cycle and power block models. The objective is to minimize the cost of the thermal storage units so that the power plant is able to sustain a $120$MW electrical daily power output. The heliostat field is not simulated in \solar{6}. The problem has $5$ variables, $6$ general inequality constraints and no equality constraints. The thirty different initial points proposed are those of the \solar{} repository\footnote{\url{https://github.com/bbopt/solar}} and a single seed per optimization is considered. 
Hence, each algorithm is run on $30$ problem instances.

The average optimization time for each \solar{} problem instance was approximately $14$ hours on a 12th Gen Intel(R) Core(TM) i7 processor.
Each of the $1,500$ blackbox evaluations within an optimization run was executed on a single core, while multiple (at most $12$) optimization runs were performed in parallel.

\cref{fig:sola6_30X0s} includes CMA-ES to the comparison.
In this problem, introducing quadratic models seems to degrade the algorithm’s overall performance, and the induced lack of smoothness adversely affects CMA-ES.
MADS-PIP QMS outperforms MADS-PB QMS, but without models MADS-PIP is preferable to MADS-PB only when $\tau$ is small.

\begin{figure}[!ht]
\centering   
\includegraphics[trim={0.7cm 0cm 0cm 0cm 0cm},clip, width=\linewidth]{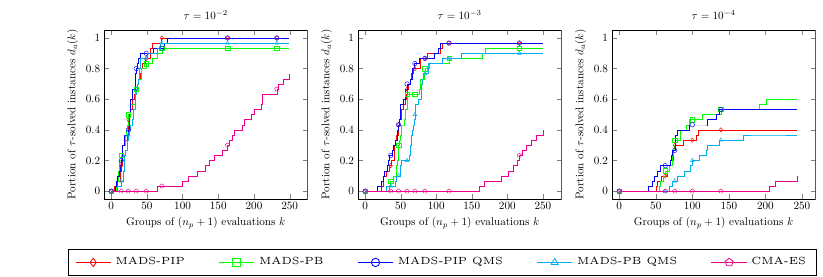}
\caption{Data profiles obtained on $30$ inequality constrained problem instances of \solar{6}.}
\label{fig:sola6_30X0s}
\end{figure}

On this first real-life blackbox problem, the difference in performance between MADS-PB and MADS-PIP is not as pronounced as in \cref{sec::Equalitypbs}.

\subsection{Equality and inequality constrained optimization}\label{sec::Equalitypbs}

This section analyzes two sets of optimization problems constrained by equalities and inequalities.

\subsubsection*{Results on CUTEst problems}

\cref{cutest:equality} lists the $25$ CUTEst problems with both equality and inequality constraints. The problem dimensions range from $4$ to $41$, the number of inequality constraints varies between $3$ and $90$, and there are up to $21$ equality constraints.
The problem instances are generated 
using five different random seeds.

\begin{table}[htp!]
\centering
\scriptsize
\renewcommand{\arraystretch}{1.}
\renewcommand{\tabcolsep}{6pt}
\begin{tabular}{|l|c|c|c||l|c|c|c||l|c|c|c|}
\hline
Problem & $n$ & $m$  & $p$
& Problem & $n$ & $m$  & $p$
& Problem & $n$ & $m$ & $p$\\
\hline \hline
ALLINQP  & 24 & 18 &  3  &
GAUSSELM & 29 & 36 &  14 &
LOADBAL  & 31 & 31 & 11 \\
ANTWERP  & 27 & 10 &   8  &
HADAMARD & 37 & 93 &  21 &
RES      & 20 & 14 &  12 \\
BLOCKQP1 & 35 & 16 & 15  &
HS74     &  4 &  5 &   3 &
SYNTHES2 & 11 & 15 &  1 \\
BLOCKQP2 & 35 & 16 &  15  &
HS75     &  4 &  5 &  3 &
SYNTHES3 & 17 & 23 &   2 \\
BLOCKQP3 & 35 & 16 & 15  &
HS114    & 10 & 11 &   3 &
TENBARS4 & 18 &  9 &    8 \\
BLOCKQP4 & 35 & 16 &  15  &
JANNSON3 & 30 &  3 &  1 &
TRUSPYR2 & 11 & 11 & 3 \\
BLOCKQP5 & 35 & 16 &  15  &
KISSING  & 37 & 78 & 12 &
ZIGZAG   & 28 & 30 &  20 \\
EG3      & 31 & 90 &   1  &
LIPPERT1 & 41 & 80 & 16 & &  & &   \\
ERRINBAR & 18 &  9 &    8  &  LIPPERT2 & 41 & 80 & 16 &  &  &  &   \\
\hline
\end{tabular}
\caption{Set of $25$ equality and inequality constrained problems selected from the CUTEst collection. The parameters  $n$, $m$, and $p$ denote, respectively, the number of variables, of inequality constraints, and of equality constraints.}
\label{cutest:equality}
\end{table}

\cref{fig:25_pbs_eq} compares MADS-PIP and MADS-PB on the $25$ CUTEst problems with equality and inequality constraints.
The plot on the left of \cref{fig:25_pbs_eq} shows the portion of instances for which the methods generate a feasible point for equality constrained problems.
Here MADS-PB is clearly outperformed by MADS-PIP.
This is not surprising because MADS-PB is not designed to handle equality constraints.
For MADS-PIP, the quadratic model helps to quickly find a feasible point.
The two plots on the right show the data profiles.  
Again, MADS-PIP is superior to MADS-PB, and the use of quadratic models is again shown to be very useful.

\begin{figure}[!ht]
\centering
\includegraphics[trim={0.7cm 0cm 0cm 0cm 0cm},clip, width=\linewidth]{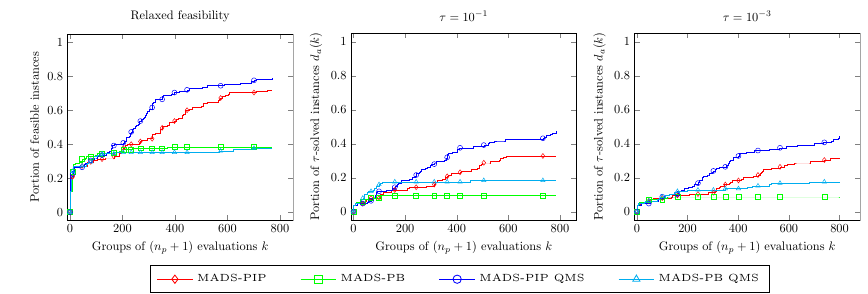}
\caption{Data profiles on $25$ CUTEst analytical problems having equality constraints. 
The left plot displays the portion of feasible instances obtained. Middle and right plots show the portion of $\tau$-solved instances for $\tau=10^{-1}$ (middle) and $\tau=10^{-3}$ (right).}
\label{fig:25_pbs_eq}
\end{figure}

\subsubsection*{Results on a real blackbox: The aircraft range MDO problem}
\label{sec:mdo_eq}
The Aircraft Range multidisciplinary design optimization (MDO) problem that has both equality and inequality constraints is also considered for benchmarking. A simulator solves the multidisciplinary analyses (MDA) to predict the operation of a supersonic business jet for a given set of variables. 
In the simulator, the aerodynamics, structure and propulsion disciplines are modelled by analytical expressions typical for an early conceptual design stage. 
The analytical expressions given in~\cite{SoSoAgSa98a} have been converted to C++ and made available in GitHub\footnote{\url{https://github.com/bbopt/aircraft_range}}.
These analytical expressions are fast to evaluate and are regarded as unknown in this blackbox simulator. 

We consider the variant of the problem in which the discipline coupling is solved by adding  interdisciplinary (coupling) variables and consistency equality constraints to the MDO problem.
This type of approach is referred to as ``individual discipline feasible'' (IDF)~\cite{Cramer1994}.
The MDO problem in \cref{sec::Inequalitypbs} uses the same discipline analyses but solves the MDA with an iterative fixed point method that is often referred to as multidisciplinary feasible (MDF).
The problem has $13$ bounded variables scaled between $0$ and $100$, $10$ general inequality constraints and $3$ general equality constraints.

Ten different initial points and ten random seeds are considered. 
A problem instance is defined by a single initial point and a random seed.
The leftmost plot in~\cref{fig:aircraft_range}
shows that again, both MADS-PB variants fail to produce feasible solutions to $70\%$ of the problem instances.
MADS-PIP has the opposite behaviour, and MADS-PIP QMS produces a feasible solution to $95\%$ of the problem instances.
The performance profiles on the right show that MADS-PB fails on 100$\%$ of the instances.  
MADS-PB QMS is clearly the dominant method.

\begin{figure}[!ht]
\centering
\includegraphics[trim={0.7cm 0cm 0cm 0cm 0cm},clip, width=\linewidth]{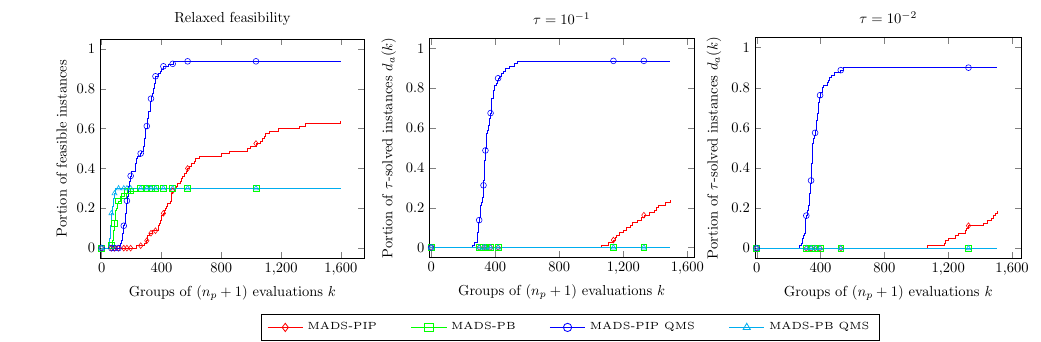}
\caption{Data profiles on 100 problem instances of Aircraft Range MDO.  
The left plot displays the portion of feasible instances obtained. Middle and right plots show the portion of $\tau$-solved instances for $\tau=10^{-1}$ (middle) and $\tau=10^{-2}$ (right).}
\label{fig:aircraft_range}
\end{figure}

In summary, the results on the real-life test problem suggest that MADS-PB and MADS-PIP are competitive on problems with only inequality constraints.  
However, as soon as equality constraints are present, MADS-PB fails as it is not designed to tackle such problems.  MADS-PIP is the first variant of MADS to handle equality constraints.

\section{Conclusions}
\label{sec-conc}
The main purpose of this work is to extend MADS to handle equality and inequality constraints efficiently.
The constraints are partitioned into two distinct sets. 
The interior set comprises certain inequality constraints, while the exterior set consists of the remaining inequality constraints together with all equality constraints. 
The interior set is managed using a logarithmic barrier based on an aggregated measure of interior violation, whereas the exterior set is treated via an exterior penalty approach.

These components are combined into a merit function, yielding the proposed penalty-interior-point method (MADS-PIP). 
This construction allows MADS to solve a sequence of unconstrained subproblems while progressively driving the penalty parameter to zero, so that the merit function approximates the original constrained problem. 

This work shows that MADS-PIP generates an infinite sequence of path-following updates and that the frame size parameters along these subsequences shrink to zero, ensuring that end-path limit points exist. 
Under mild Lipschitz continuity and density assumptions, together with appropriate constraint qualifications for both feasibility (FCQ) and stationarity (SCQ), end-path limit points are shown to be feasible for the original problem and are shown to satisfy first-order Clarke-stationarity conditions for the inequality-constrained case. 
These results provide rigorous nonsmooth convergence guaranties for the penalty interior-point strategy when combined with the MADS framework.

Furthermore, MADS-PIP was implemented in {\sf NOMAD~4}~\cite{nomad4paper}, and extensive tests were conducted on both CUTEst and other challenging blackbox problems.
These experiments indicate that MADS-PIP is competitive with the progressive barrier approach (MADS-PB) on inequality-only constrained problems and demonstrate a clear advantage when equality constraints are present.

{\small \printbibliography}


\end{document}